\newtheorem{definition}{Definition}[section]
\newtheorem{lemma}{Lemma}[section]
\newtheorem{assumption}{Assumption}[section]
\newtheorem{remark}{Remark}[section]
\newcommand{\R}{\mathbb R}
\newcommand{\HO}{H}
\newcommand{\HP}{H_D}
\newcommand{\RL}{{\mathcal R}}
\newcommand{\NC}{{k_0}}%NC : neighbours count
\newcommand{\MC}{{k_1}}%NC : multiplicity count
\newcommand{\supp}[1]{\operatorname{supp}(#1)}
\newcommand{\id}{{I_d}}
\DeclareMathOperator{\ima}{Im}
\newcommand{\lecnot}{\cite{Dolean:2015:IDDSiam} }
\newcommand{\QED}{\hspace*{\fill}\rule{2.5mm}{2.5mm}}  
\newenvironment{proof}{{\bf Proof\ }}{\QED\\}  
\newcommand{\alerte}[1]{ {\bf\color{red}Alerte :  #1}}
\renewcommand{\alerte}[1]{} 
\renewcommand{\id}{I}
\author[1]{F.~Nataf}
\affil[1]{\footnotesize Laboratoire J.L.~Lions, UPMC, CNRS UMR7598, Equipe LJL-INRIA Alpines, frederic.nataf@sorbonne-universite.fr, Paris, France}
\title{Mathematical Analysis of Robustness of Two-Level Domain Decomposition Methods with respect to Inexact Coarse Solves}
\begin{document}

\maketitle 

\setcounter{tocdepth}{5}
\tableofcontents
%\cleardoublepage
\pagenumbering{arabic}

\begin{abstract}
Convergence of domain decomposition methods rely heavily on the efficiency of the coarse space used in the second level. The GenEO coarse space has been shown to lead to a robust two-level Schwarz preconditioner which scales well over multiple cores ~\cite{Spillane:2014:ASC,Dolean:2015:IDDSiam}. The robustness is due to its good approximation properties for problems with highly heterogeneous material parameters. It is available in the finite element packages FreeFem++~\cite{Hecht:2012:NDF}, Feel++~\cite{Prudhomme:2006:DSE}, Dune~\cite{Blatt:2016:Distributed} and is implemented as a standalone library in HPDDM~\cite{Jolivet:2014:HPD} and as such is available as well as a PETSc preconditioner. But the coarse component of the preconditioner can ultimately become a bottleneck if the number of subdomains is very large and exact solves are used. It is therefore interesting to consider the effect of inexact coarse solves. In this paper, robustness of GenEO methods is analyzed with respect to inexact coarse solves. Interestingly, the GenEO-2 method introduced in~\cite{haferssas:2016:ASM} has to be modified in order to be able to prove its robustness in this context. 
%\keywords{Domain Decomposition Methods \and Multilevel method \and Adaptivity}
% \PACS{PACS code1 \and PACS code2 \and more}
% \subclass{MSC code1 \and MSC code2 \and more}
\end{abstract}

\section{Introduction}
 \label{sec:introduction}

Convergence of domain decomposition methods rely heavily on the efficiency of the coarse space used in the second level, see~\cite{Nicolaides:DCG:1987,Toselli:2005:DDM,Pechstein:2017:UFA} and references therein. {These methods are based on two ingredients: a coarse space (CS) and a correction formula (see e.g.~\cite{Tang:2009:CTL}). The GenEO coarse space introduced in~\cite{Spillane:2014:ASC} has been shown to lead to a robust two-level Schwarz preconditioner which scales well over multiple cores. The robustness is due to its good approximation properties for problems with highly heterogeneous material parameters. This approach is closely related to~\cite{Efendiev:2012:RDD}. We refer to the introduction of~\cite{Spillane:2014:ASC} for more details on the differences and similarities between both approaches. Here we will mainly work with a slight modification of the GenEO CS introduced  in~\cite{Dolean:2015:IDDSiam} for the additive Schwarz method (see e.g.~\cite{Toselli:2005:DDM}) and the GenEO-2 CS introduced in~\cite{haferssas:2016:ASM} for the P.L.~Lions algorithm~\cite{Lions:1990:SAM}. These  variants are easier to implement and in practice have similar performances although they may lead to a larger CS. More details are given in Annex~\ref{sec:annex} where we explain how to adapt the framework of~\cite{Dolean:2015:IDDSiam} to the GenEO CS of~\cite{Spillane:2014:ASC}. }

We focus in this paper on a modification of the coarse component of the correction formula. Indeed, the coarse component of the preconditioner can ultimately become a bottleneck if the number of subdomains is very large and exact solves are used. It is therefore interesting to consider the effect of inexact coarse solves on the robustness. We show that the additive Schwarz method is naturally robust. Interestingly, the GenEO-2 method introduced in~\cite{haferssas:2016:ASM} has to be modified in order to be able to prove its robustness in this context. In the context of domain decomposition methods, the robustness of the BDDC w.r.t. inexact coarse solves has been studied in~\cite{Tu:2007:TLB,Tu:2011:TLB} and in~\cite{Mandel:2008:MBD}. We focus here on GenEO methods. { Compared to works on multilevel methods such as~\cite{Zhang:1992:MSM,Dryja:1996:MSM} which are concerned with Schwarz multilevel methods where the coarse space is obtained by a coarse grid discretisation of the elliptic problem, we explicitly state robustness results of the two level method with respect to inexact coarse solves when the coarse space is obtained by the solution of local generalized eigenvalue problems. Moreover, we are not concerned only with Schwarz methods but also with P.L.~Lions algorithm.}\\

The general framework of our work is the following. {Let $M^{-1}$ be a one level preconditioner further enhanced by a second level correction} based on a rectangular matrix $Z$ whose columns are a basis of a coarse space $V_0$. The coarse space correction is  
 \begin{equation}
%	 \label{eq:Edef}
 	Z (Z^T\,A\,Z)^{-1} Z^T\,,
 \end{equation}
and the coarse operator is defined by
 \begin{equation}
	 \label{eq:Edef}
 	E := Z^T\,A\,Z\,.
 \end{equation}
Let $M^{-1}$ denote a one-level preconditioner, the {hybrid} two-level method is defined by:
\[
  M^{-1}_2 :=  Z\,E^{-1}\,Z^T +  (\id - Z\,E^{-1}\,Z^T A ) M^{-1} (\id - A Z\,E^{-1}\,Z^T )  \,,
\] 	
see the balancing domain decomposition method by J.~Mandel~\cite{Mandel:1992:BDD}. {This formula also appeared in an  unpublished work by Schnabel \cite{Schnabel:83:QNM}, see~\cite{Gower:2016:SBB} for more details on the connections between these works. }

We consider Geneo methods, where the coarse space $V_0$ spanned by the columns of $Z$ is built from solving generalized eigenvalue problems (GEVP) in the subdomains. Recall that these GEVP solves are purely parallel tasks with no communication involved. This part of the preconditioner setup is not penalizing parallelism. Actually, in strong scaling experiments where the number of degrees of freedom of subdomains is smaller and smaller, the elapsed time taken by these tasks will decrease. Thus, this task scales strongly. On the other hand, as the size of matrix $Z^T\,A\,Z$ typically increases linearly with the number of subdomains, the solving of the corresponding linear systems for instance with a $LU$ factorization becomes a bottleneck in two-level domain decomposition methods. It is therefore interesting to estimate the robustness of the modified two-level method when in \eqref{eq:Edef} the operator $E$ is approximated by some operator $\tilde E$:
\[\boxed{
   \tilde E \simeq E \,,
}\]
since it paves the way to inexact coarse solves or to three or more level methods. {Operator $\tilde E$ may be obtained in many ways: approximate LU factorizations (e.g. ILU(k), ILU-$\epsilon$ or single precision factorization), Sparse Approximate Inverse, Krylov subspace recycling methods, multigrid methods and of course domain decomposition methods. In the latter case, we would have a multilevel method. Note that our results are expressed in terms of the spectral properties of $E\tilde E ^{-1}$ so that an approximation method for which such results exist is preferable.}

More precisely, formula~\eqref{eq:Edef} is modified and the preconditioner we study is defined by:
\[
\tilde  M^{-1}_2 :=   Z\,\tilde E^{-1}\,Z^T +  (\id - Z\,\tilde E^{-1}\,Z^T A ) M^{-1} (\id - A Z\,\tilde E^{-1}\,Z^T )  \,.
\] 	
{and throughout the paper we make 
\begin{assumption}
	\label{ass:EtildeSPD}
	The operator $\tilde E$ is symmetric positive definite. 
\end{assumption}}
 %Approcher $Z$ n'est pas rentable cf son remplissage
   
 % section introduction (end)

\section{Basic definitions} % (fold)
\label{sec:basic_definitions}

The problem to be solved is defined via a variational formulation on a domain $\Omega\subset\R^d$ for $d\in\mathbb{N}$:
\[
\text{Find }u\in V \text{ such that : }
a_\Omega(u,v) = l(v)\,,\ \ \forall v\in V\,,
\]
where $V$ is a Hilbert space of functions from $\Omega$ with real values. The problem we consider is given through a symmetric positive definite bilinear form $a_\Omega$ that is defined in terms of an integral over any open set $\omega \subset \Omega$. Typical examples are the {heterogeneous diffusion }equation ($\mathbf{K}$ is a diffusion tensor)
\[
a_{\omega}(u,v) :=  \int_\omega \mathbf{K}\, \nabla u \cdot \nabla v \, dx\,,
\]
or the elasticity system ($\boldsymbol{C}$ is the fourth-order stiffness tensor and $\boldsymbol{\varepsilon}(\boldsymbol{u})$ is the strain tensor of a displacement field $\boldsymbol{u}$):
\[
a_{\omega}(\boldsymbol{u},\,\boldsymbol{v}) := \int_\omega \boldsymbol{C} : \boldsymbol{\varepsilon}(\boldsymbol{u}) : \boldsymbol{\varepsilon}(\boldsymbol{v})\, dx \,.
\]
The problem is discretized by a finite element method. Let ${\mathcal N}$ denote the set of degrees of freedom and $(\phi_k)_{k\in {\mathcal N}}$ be a finite element basis on a mesh ${\mathcal{T}}_h$. Let $A\in \R^{\# {\mathcal N}\times\# {\mathcal N}}$ be the associated finite element matrix, $A_{kl}:=a_\Omega(\phi_l,\phi_k)$, $k,l\in {\mathcal N}$. For some given right hand side $\mathbf{F}\in \R^{\#{\mathcal N}}$, we have to solve a linear system in $\mathbf{U}$ of the form
\[
A \mathbf{U} = \mathbf{F}\,.
\]
Domain $\Omega$ is decomposed into $N$ (overlapping or non overlapping) subdomains $(\Omega_i)_{1\le i\le N}$ so that all subdomains are a union of cells of the mesh ${\mathcal{T}}_h$.  This decomposition induces a natural decomposition of the set of indices ${\mathcal N}$ into $N$ subsets of indices $({\mathcal N}_i)_{1\le i\le N}$:
\begin{equation}
	\label{eq:thCG:Ni}
	\mathcal{N}_i := \{ k\in \mathcal{N}\ |\ meas(\supp{\phi_k} \cap \Omega_i) > 0\}\,,\ 1\le i \le N.	
\end{equation}
For all $1\le i\le N$, let $R_i$ be the restriction
	matrix from $\R^{\#{\mathcal N}}$ to the subset $R^{\#{\mathcal
	  N}_i}$ and $D_i$ be a diagonal matrix of size $\#
	{\mathcal N}_i \times \# {\mathcal N}_i$, so that we have a partition of unity at the algebraic level, 
\begin{equation}
	\label{eq:POU}
 \sum_{i=1}^N R_i^T D_i R_i = \id\,,
\end{equation}
where $\id\in\R^{\# {\mathcal N}\times\# {\mathcal N}}$ is the identity matrix.\\

We also define for all subdomains $1\le j\le N$,  $\widetilde A^j$, the $\# {\mathcal N}_j\times \# {\mathcal N}_j$ matrix defined by
\begin{equation}
	  \label{eq:thCG:AtildeFE}
{ {\mathbf{V}}_j^T \widetilde A^j {\mathbf{U}}_j := a_{\Omega_j}\left(\sum_{l\in  {\mathcal N}_j}\mathbf{U}_{jl} \phi_l,\, \sum_{l\in  {\mathcal N}_j}{\mathbf{V}}_{jl} \phi_l\right)\,,\ \  {\mathbf{U}}_j,\, {\mathbf{V}}_j\in \R^{ {\mathcal N}_j}  \,.}
\end{equation}
When the bilinear form $a$ results from the variational solve of a
Laplace problem, the previous matrix corresponds to the discretization
of local Neumann boundary value problems. For this reason we will call it ``Neumann'' matrix even in a more general setting. 

%DEFINIR LES K0 ET K1 
We also make use of two numbers $\NC$ and $\MC$ related to the domain decomposition. Let 
\begin{equation}
	\label{eq:thCG:kzero}
\NC  := \max_{1\le i \le N} \# \left\{ j\ |\   R_j A\,R_i^T\neq 0 \right\}	
\end{equation}
be the maximum multiplicity of the interaction between
subdomains plus one. Let $\MC$ be the maximal multiplicity of subdomains intersection, i.e. the largest integer $m$ such that there exists $m$ different subdomains whose intersection has a non zero measure.\\

   Let $\tilde P_0$ be defined as:
   \begin{equation}
   	\label{eq:potilde}
   \boxed{
   	\tilde P_0:=Z \tilde E^{-1} Z^T A\,,
   }
   \end{equation}
   the operator $\tilde P_0$ is thus an approximation to the $A$-orthogonal projection on $V_0$
   \[\boxed{
   	      P_0:=Z E^{-1} Z^T A
   }\]
   which corresponds to an exact coarse solve.

Note that although $\tilde P_0$ is not a projection it has the same kernel and range as $P_0$:
\begin{lemma}
	\label{th:kerimpotilde}
We have
\[	
	\ker P_0 = \ker \tilde P_0 = V_0^{A\perp} \text{\ \  and \ \ } \ima P_0 = \ima \tilde P_0 = V_0\,,
\]
where $V_0^{A\perp}$ is the vector space $A$-orthogonal to $V_0$, that is when $\R^{\#{\mathcal N}}$ is endowed with the scalar product induced by $A$:
$(x\,,\,y)_A := (x\,,\,Ay)$. 
\end{lemma}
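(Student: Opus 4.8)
The plan is to exploit three elementary invertibility facts and treat $P_0$ and $\tilde P_0$ in parallel. First, since the columns of $Z$ form a basis of $V_0$, the matrix $Z\in\R^{\#{\mathcal N}\times n}$ (with $n:=\dim V_0$) has full column rank, so $Z$ is injective and $Z^T$ is surjective onto $\R^{n}$. Second, $A$ is SPD, hence $E=Z^TAZ$ is SPD and therefore invertible on $\R^{n}$; and $\tilde E$ is invertible on $\R^{n}$ by Assumption~\ref{ass:EtildeSPD}. The point to keep in mind is that $E^{-1}$ and $\tilde E^{-1}$ play exactly the same structural role: each is an invertible endomorphism of $\R^{n}$ sandwiched between the injective map $Z$ and the surjective map $Z^TA$.

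For the ranges: from $P_0=Z\,(E^{-1}Z^TA)$ and $\tilde P_0=Z\,(\tilde E^{-1}Z^TA)$ we immediately get $\ima P_0,\ \ima \tilde P_0\subseteq \ima Z=V_0$. For the reverse inclusion, note that $Z^TA$ is surjective onto $\R^{n}$ (composition of the invertible $A$ with the surjective $Z^T$), and that $E^{-1}$, respectively $\tilde E^{-1}$, is a bijection of $\R^{n}$; hence $E^{-1}Z^TA$ and $\tilde E^{-1}Z^TA$ are both surjective onto $\R^{n}$, and composing with the injective $Z$ yields $\ima P_0=\ima \tilde P_0=\ima Z=V_0$. (For $P_0$ alone one may shortcut this by observing that $P_0$ restricts to the identity on $V_0$, since for $v=Zw$ one has $P_0v=ZE^{-1}(Z^TAZ)w=ZE^{-1}Ew=Zw=v$; but this shortcut fails for $\tilde P_0$, so the surjectivity argument is the one to use in general.)

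For the kernels: because $Z$ is injective and $E^{-1}$, respectively $\tilde E^{-1}$, is invertible, we have the chain of equivalences $P_0x=0\iff E^{-1}Z^TAx=0\iff Z^TAx=0\iff \tilde E^{-1}Z^TAx=0\iff \tilde P_0x=0$. It remains to identify the condition $Z^TAx=0$: it means $w^TZ^TAx=(Zw)^TAx=0$ for every $w\in\R^{n}$, i.e. $(y,x)_A=0$ for every $y\in\ima Z=V_0$, which is exactly $x\in V_0^{A\perp}$. Hence $\ker P_0=\ker \tilde P_0=V_0^{A\perp}$.

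There is no genuine obstacle here; the only care needed is not to invoke any projection property of $\tilde P_0$ (it is not a projection) and instead argue solely via the invertibility of $A$, of $\tilde E$, and of $Z$ on its range, and to state at the outset that $n=\dim V_0$ is the common dimension on which $E$ and $\tilde E$ act so that the phrases ``bijection of $\R^{n}$'' are justified.
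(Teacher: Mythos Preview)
Your proof is correct and follows essentially the same strategy as the paper: both reduce the kernel claim to $\ker Z^TA=V_0^{A\perp}$ and the image claim to $\ima Z=V_0$. The only minor technical differences are that for the kernel the paper argues via the quadratic form $(\tilde E^{-1}Z^TAx,Z^TAx)=0\Rightarrow Z^TAx=0$ (using positive definiteness of $\tilde E$), whereas you use the cleaner chain ``$Z$ injective, $\tilde E^{-1}$ invertible'' which needs only invertibility; and for the image the paper exhibits an explicit preimage $Z(Z^TAZ)^{-1}\tilde E\beta$ of $Z\beta$, whereas you argue abstractly via surjectivity of $Z^TA$ composed with the bijection $\tilde E^{-1}$.
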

\begin{proof}
First note that the kernel of $\tilde P_0$ contains $\ker Z^TA$. On the other hand, we have:
\[
  \tilde P_0\,x = Z \tilde E^{-1} Z^T A x = 0 \Rightarrow (Z \tilde E^{-1} Z^T A x , A x) = (\tilde E^{-1} Z^T A x , Z^T A x) = 0\,.
\]
Since $\tilde E$ is SPD, it means that $Z^TAx=0$, that is $x\in \ker Z^TA$. We have thus $\ker \tilde P_0 = \ker Z^TA$. Note that
\[
   Z^T A x = 0 \Leftrightarrow \forall y\,\ (Ax\,,\,Z y) = 0 \Leftrightarrow x\in V_0^{A\perp}\,.
\]
As for the image of $\tilde P_0$, since the last operation in its definition is the multiplication by the matrix $Z$ we have $\ima P_0 \subset V_0$. Conversely, let $y\in V_0$, there exists $\beta$ such that $y=Z\beta$. It is easy to check that $y=\tilde P_0 (Z\,(ZAZ)^{-1}\tilde E\, \beta)$. Thus, $\ima \tilde P_0 = V_0$. \\ 
The same arguments hold if $\tilde E$ is replaced by $E$. Thus, $\tilde P_0$ and  $P_0$ have the same kernel and image.	
\end{proof}

% section basic_definitions (end)

\section{Inexact Coarse Solves for GenEO} % (fold)
\label{sub:inexact_cs_for_geneo}
The GenEO coarse space was introduced in~\cite{Spillane:2014:ASC} and its slight modification is defined as follows, see~\cite{Dolean:2015:IDDSiam}:

\begin{definition}[Generalized Eigenvalue Problem for GenEO]
	\label{th:tauthresholdgeneo}
	For each subdomain $1\le j\le N$, we introduce the generalized eigenvalue problem
	\begin{equation}
	  \label{eq:eigDADtildeA}
	{\begin{array}{c}
	\mbox{ Find }(\mathbf{V}_{jk},\tau_{jk})\in
	\R^{\# {\mathcal N}_j}\setminus \{0\} \times \R
	\mbox{ such that}\\
	D_j R_j\,A\,R_j^T D_j  \mathbf{V}_{jk} = \tau_{jk} \widetilde A^j  \mathbf{V}_{jk} 
	\, \,.
	\end{array}}
	\end{equation}
	Let $\tau>0$ be a user-defined threshold, we define $V_{geneo}^\tau\subset\R^{\# {\mathcal N}}$ as the vector space spanned by the family of vectors $(R_j^T D_j {\mathbf V}_{jk})_{\tau_{jk} > \tau\,, 1\le j\le N}$ corresponding to eigenvalues larger than $\tau$. 
\end{definition}
Let $\tilde\pi_j$ be the projection from $\R^{\# {\mathcal N}_j}$ on $\text{Span}\{ \mathbf{V}_{jk} |\, \tau_{jk} > \tau \}$ parallel to $\text{Span}\{ \mathbf{V}_{jk} |\, \tau_{jk} \le \tau \}$. 

In this section, $Z$ denotes a rectangular matrix whose columns are a basis of the coarse space $V_{geneo}^\tau$ defined in Definition~\eqref{th:tauthresholdgeneo}. The dimension of $Z$ is $\#{\mathcal N} \times \#{\mathcal N}_0$. The GenEO preconditioner with inexact coarse solve reads:
\begin{equation}
	\label{eq:GenEOACS}
  M^{-1}_{GenEOACS} :=  Z\, \tilde E^{-1}\,Z^T +   (\id - \tilde P_0)\, (\sum_{i=1}^N R_i^T\, (R_i\,A\,R_i^T)^{-1}\,R_i)\, (\id - \tilde P_0^T)\,. 	
\end{equation}

{
The study the spectrum of $M^{-1}_{GenEOACS}A$ is based on the Fictitious Space lemma which is recalled here, see \cite{Nepomnyaschikh:1991:MTT} for the original paper and \cite{Griebel:1995:ATA} for a modern presentation. 
 \begin{lemma}[Fictitious Space Lemma, Nepomnyaschikh 1991]
   \label{th:fictitiousSpaceLemma}
   
   Let $\HO$ and $\HP$ be two Hilbert spaces, with the scalar products
   denoted by $(\cdot,\cdot)$ and $(\cdot,\cdot)_D$. Let the symmetric positive bilinear forms $a\,:\,\HO \times \HO \rightarrow \R$ and $b\,:\,\HP \times \HP \rightarrow \R$, generated by the s.p.d. operators $A\,:\,\HO \rightarrow \HO$ and $B\,:\,\HP \rightarrow \HP$, respectively (i.e. $(Au,v)=a(u,v)$ for all $u,v\in\HO$ and $(Bu_D,v_D)_D=b(u_D,v_D)$ for all $u_D,v_D\in\HP$). Suppose that there exists a linear operator $\RL\,:\,\HP \rightarrow \HO$ that satisfies the following three assumptions:
\begin{itemize}
	\item[(i)] $\RL$ is surjective.
	\item[(ii)] Continuity of $\RL$: there exists a positive constant $c_R$ such that 
	\begin{equation}\label{eq:cr}
	a(\RL u_D,\RL u_D) \le c_R\cdot  b(u_D,u_D)\ \ \forall u_D\in \HP\,.
	\end{equation}
	\item[(iii)] Stable decomposition: there exists a positive constant $c_T$ such that for all $u\in\HO$ there exists $u_D\in \HP$ with $\RL u_D=u$ and 
	\begin{equation}\label{eq:ct}
	c_T\cdot b(u_D,u_D) \le a(\RL u_D,\RL u_D)=a(u,u)\,.
	\end{equation}	
\end{itemize}
We introduce the adjoint operator $\RL^*\,:\,\HO\rightarrow \HP$ by
$(\RL u_D,\,u) = (u_D,\,\RL^* u)_D$ for all $u_D\in\HP$ and
$u\in\HO$.\\
 Then, we have the following spectral estimate
\begin{equation}\label{eq:fictestimate}
%\boxed{
c_T\cdot  a(u,u) \le a\left(\RL B^{-1} \RL^* A u,\,u\right) \le c_R\cdot  a(u,u)\,,\ \ \forall u\in\HO\,
%} 
\end{equation}
which proves that the eigenvalues of operator $\RL B^{-1} \RL^* A$ are bounded from below by $c_T$ and from above by $c_R$. 
 \end{lemma}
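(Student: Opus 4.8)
The plan is to introduce the operator $T:=\RL B^{-1}\RL^* A\,:\,\HO\to\HO$ and to prove \eqref{eq:fictestimate} by bounding the Rayleigh quotient $a(Tu,u)/a(u,u)$ from below by $c_T$ and from above by $c_R$. First I would check that $T$ is self-adjoint and positive semi-definite for the $a$-scalar product: using the definition of $\RL^*$ and the symmetry of $A$ and $B^{-1}$,
\[
a(Tu,v)=(A\RL B^{-1}\RL^* Au,v)=(\RL B^{-1}\RL^* Au,Av)=(B^{-1}\RL^* Au,\RL^* Av)_D\,,
\]
which is symmetric in $u,v$ and nonnegative for $v=u$; hence the eigenvalues of $T$ are real and nonnegative and the claimed spectral bounds are equivalent to \eqref{eq:fictestimate}. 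The same computation with $v=u$, setting $w_D:=B^{-1}\RL^* Au$, yields the key identity
\[
a(Tu,u)=(B^{-1}\RL^* Au,\RL^* Au)_D=b(w_D,w_D)\,,\qquad \RL w_D=Tu\,,
\]
and, for any $v_D\in\HP$, the adjoint relation gives $a(\RL v_D,u)=(\RL v_D,Au)=(v_D,\RL^* Au)_D=b(v_D,w_D)$.

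For the upper bound I would apply the continuity assumption (ii) to $w_D$. Since $\RL w_D=Tu$,
\[
a(Tu,Tu)=a(\RL w_D,\RL w_D)\le c_R\,b(w_D,w_D)=c_R\,a(Tu,u)\le c_R\,a(Tu,Tu)^{1/2}\,a(u,u)^{1/2}\,,
\]
the last inequality being Cauchy--Schwarz for the $a$-product. Dividing (the case $Tu=0$ being trivial) gives $a(Tu,Tu)^{1/2}\le c_R\,a(u,u)^{1/2}$, and one further Cauchy--Schwarz yields $a(Tu,u)\le c_R\,a(u,u)$.

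For the lower bound, given $u\in\HO$ I would invoke the stable decomposition (iii) to obtain $u_D\in\HP$ with $\RL u_D=u$ and $c_T\,b(u_D,u_D)\le a(u,u)$. Then, using the identity $a(\RL u_D,u)=b(u_D,w_D)$ and Cauchy--Schwarz for the symmetric positive form $b$,
\[
a(u,u)=a(\RL u_D,u)=b(u_D,w_D)\le b(u_D,u_D)^{1/2}\,b(w_D,w_D)^{1/2}=b(u_D,u_D)^{1/2}\,a(Tu,u)^{1/2}\,.
\]
Squaring and inserting $b(u_D,u_D)\le c_T^{-1}a(u,u)$ gives $c_T\,a(u,u)\le a(Tu,u)$ (the case $u=0$ being trivial), which completes the proof of \eqref{eq:fictestimate}; the surjectivity assumption (i) is what guarantees that a decomposition as in (iii) is available for every $u\in\HO$.

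I expect the only real obstacle to be spotting the right reformulations — the identities $a(Tu,u)=b(w_D,w_D)$ and $a(\RL v_D,u)=b(v_D,w_D)$ — and then being careful to apply Cauchy--Schwarz in the correct inner product ($a$ for the upper bound, $b$ for the lower bound) while handling the degenerate cases $u=0$ and $Tu=0$. Everything else is routine bookkeeping with the adjoint relation and the symmetry of $A$ and $B$.
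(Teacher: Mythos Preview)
Your proof is correct and follows the standard argument for the Fictitious Space Lemma. Note, however, that the paper does not actually prove this lemma: it merely states it and refers the reader to \cite{Nepomnyaschikh:1991:MTT} and \cite{Griebel:1995:ATA} for the proof. The argument you give---introducing $w_D:=B^{-1}\RL^*Au$, using the identities $a(Tu,u)=b(w_D,w_D)$ and $a(\RL v_D,u)=b(v_D,w_D)$, and then applying Cauchy--Schwarz in the $a$-product for the upper bound and in the $b$-product for the lower bound---is precisely the standard one found in those references, so there is nothing to compare.
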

Loosely speaking, the first assumption corresponds to equation~(2.3), page 36 of \cite{Toselli:2005:DDM} where the global Hilbert space is assumed to satisfy a decomposition into subspaces. The second assumption is related to Assumptions~2.3 and 2.4,  page~40 of \cite{Toselli:2005:DDM}. The third assumption corresponds to the Stable decomposition Assumption~2.2 page~40 of \cite{Toselli:2005:DDM}.}\\

In order to apply this lemma to the preconditioned operator $M^{-1}_{GenEOACS}\,A$, we introduce Hilbert spaces $\HO$ and $\HP$ as follows:
\[
	\HO := \R^{\#{\mathcal N}}
\]
endowed with the bilinear form $a({\mathbf U},{\mathbf U}) := (A\,{\mathbf U} , {\mathbf U})$ and 
\[
	\HP := \R^{\#{\mathcal N}_0} \times \Pi_{i=1}^N\R^{\#{\mathcal N}_i}
\]
endowed with the following bilinear form
\begin{equation}
  \label{eq:bHPapprox}
  \begin{array}{rcl}
  \tilde b: \HP \times \HP &\longrightarrow &\R \\    
\phantom{b: }(\,({\mathbf U}_0\,,\,({\mathbf U}_i)_{1\le i \le N}), \,({\mathbf V}_0\,,\,({\mathbf V}_i)_{1\le i \le N})\, ) &\longmapsto& (\tilde E {\mathbf U}_0,\,{\mathbf V}_0) + (R_i\,A\,R_i^T\,{\mathbf U}_i,\,{\mathbf V}_i)\,.
  \end{array}
\end{equation}
We denote by $\tilde B: \HP \rightarrow \HP$ the operator such that $(\tilde Bu_D,v_D)_D=\tilde b(u_D,v_D)$ for all $u_D,v_D\in\HP$.\\ 
Let  $\widetilde\RL: \HP \longrightarrow \HO$ be defined by
\begin{equation}
  \label{eq:RL} 
\widetilde\RL({\cal U}):= Z\, \mathbf{U}_0 +  (\id- \tilde P_0 )\,\sum_{i=1}^N R_i^T {\mathbf U}_i\,,
\end{equation}
{where ${\cal U}:=(\mathbf{U}_0 \,,\,({\mathbf U}_i)_{1\le i\le N})$.} Recall that if we had used an exact coarse space solve, we would have introduced:
\begin{equation}
  \label{eq:RLexact} 
\RL({\cal U}):= Z\, {\mathbf U}_0 +  (\id- P_0 )\,\sum_{i=1}^N R_i^T {\mathbf U}_i\,.
\end{equation}
Note that we have 
\[
   \widetilde\RL({\cal U}) = \RL({\cal U}) + (P_0-\tilde P_0) \,\sum_{i=1}^N R_i^T {\mathbf U}_i\,.
\]
It can be checked that $M^{-1}_{GenEOACS} = \widetilde\RL\, \widetilde B^{-1} \,\widetilde\RL^T$, see \eqref{eq:GenEOACS}. In order to apply the fictitious space Lemma~\ref{th:fictitiousSpaceLemma}, three assumptions have to be checked. \\

$\bullet$ $\widetilde\RL$ is onto.\\
Let $\mathbf{U}\in \HO$, we have $\mathbf{U} = \tilde P_0 \mathbf{U} + (\id- \tilde P_0  )\,\mathbf{U}$. By Lemma~\ref{th:kerimpotilde}, $\tilde P_0 \mathbf{U}\in V_0$ so that there exists $\mathbf{U}_0\in \R^{\#{\mathcal N}_0}$ such that $\tilde P_0 \mathbf{U} = Z \mathbf{U}_0$. {Owing to~\eqref{eq:POU}, }we have
\[
  \mathbf{U} = Z \mathbf{U}_0 + (\id- \tilde P_0  )\,\sum_{i=1}^N R_i^T D_i R_i \mathbf{U} = \widetilde\RL(\mathbf{U}_0, (D_i R_i \mathbf{U})_{1\leq i\leq N} ))
\]

$\bullet$ Continuity of $\widetilde\RL$\\
We have to estimate a constant $c_R$ such that for all ${\mathcal U}=({\mathbf U}_0\,,\,({\mathbf U}_i)_{1\le i \le N})\in \HO$, we have:
\[
 a(\widetilde\RL ({\mathcal U}),\widetilde\RL ({\mathcal U})) \le c_R\, \tilde b({\mathcal U}\,,\,{\mathcal U})\,.
\]
Let $\delta$ be some positive number. Using that the image of $P_0-\tilde P_0$ is $a$-orthogonal to the image of $\id- P_0$, Cauchy-Schwarz inequality and the $a$-orthogonality of the projection $\id-P_0$, we have: 
\[
\begin{array}{lcl}
a(\widetilde\RL ({\mathcal U}),\widetilde\RL ({\mathcal U}))
&=& \| \RL ({\mathcal U}) + (P_0-\tilde P_0) \sum_{i=1}^N R_i^T \mathbf{U}_i   \|_A^2\\[.9em]
&=& \| \RL ({\mathcal U})  \|_A^2
+ 2\, a( Z\, \mathbf{U}_0 + (\id-P_0) \sum_{i=1}^N R_i^T \mathbf{U}_i\, ,\, (P_0-\tilde P_0) \sum_{i=1}^N R_i^T \mathbf{U}_i  )\\[.4em]
& &+ \| (P_0-\tilde P_0) \sum_{i=1}^N R_i^T \mathbf{U}_i   \|_A^2\\[.9em]
&=& \| \RL ({\mathcal U})  \|_A^2
+ 2\, a( Z\, \mathbf{U}_0\, ,\, (P_0-\tilde P_0) \sum_{i=1}^N R_i^T \mathbf{U}_i  )\\[.4em]
& &+ \| (P_0-\tilde P_0) \sum_{i=1}^N R_i^T \mathbf{U}_i   \|_A^2\\[.9em]
&\leq& \| \RL ({\mathcal U})  \|_A^2
+ \delta \|Z\,\mathbf{U}_0\|_A^2 + \frac{1}{\delta}\, \|(P_0-\tilde P_0) \sum_{i=1}^N R_i^T \mathbf{U}_i \|_A^2\\[.4em]
& &+ \| (P_0-\tilde P_0) \sum_{i=1}^N R_i^T \mathbf{U}_i   \|_A^2\\[.9em]
&\leq& \|Z\,\mathbf{U}_0\|_A^2 + \| \sum_{i=1}^N R_i^T \mathbf{U}_i   \|_A^2
+ \delta \|Z\,\mathbf{U}_0\|_A^2 \\[.4em]
& &+ (1+\frac{1}{\delta})\, \|(P_0-\tilde P_0) \sum_{i=1}^N R_i^T \mathbf{U}_i \|_A^2\\[.9em]
&\leq& (1+\delta) \|Z\,\mathbf{U}_0\|_A^2 + (1+\|P_0-\tilde P_0\|_A^2(1+\frac{1}{\delta})) \| \sum_{i=1}^N R_i^T \mathbf{U}_i   \|_A^2\\[.9em]
&\leq& (1+\delta) \lambda_{max}(E\tilde E^{-1}) (\tilde E\,\mathbf{U}_0 , \mathbf{U}_0) + (1+\|P_0-\tilde P_0\|_A^2(1+\frac{1}{\delta}))\,k_0 \sum_{i=1}^N\| R_i^T \mathbf{U}_i   \|_A^2\\[.9em]
&\leq& \max\left(\, (1+\delta)\,\lambda_{max}(E\tilde E^{-1}) ,\, [1+\|P_0-\tilde P_0\|_A^2(1+\frac{1}{\delta})]\,k_0\,\right) \,\,\tilde b({\mathcal U}\,,\,{\mathcal U})\,.
\end{array}
\]
It is possible to minimize over $\delta$ the factor in front of $\tilde b({\mathcal U}\,,\,{\mathcal U})$  using the 
\begin{lemma}
	\label{th:optannexe}
	Let $c,d,\alpha$ and $\beta$ be positive constant, we have
\[
  \min_{\delta>0} \max (c+\alpha \delta , d + \beta \delta^{-1}) = \frac{d+c+\sqrt{(d-c)^2+4\alpha\beta}}{2}\,.
\]
\end{lemma}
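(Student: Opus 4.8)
The plan is to prove the identity
\[
  \min_{\delta>0} \max\bigl(c+\alpha\delta,\; d+\beta\delta^{-1}\bigr) = \frac{c+d+\sqrt{(d-c)^2+4\alpha\beta}}{2}
\]
by the standard observation that a max of two terms, one increasing and one decreasing in $\delta$, is minimized at the crossing point. First I would set $f(\delta):=c+\alpha\delta$ and $g(\delta):=d+\beta\delta^{-1}$, noting that $f$ is continuous and strictly increasing on $(0,\infty)$ with $f(0^+)=c$ and $f(\delta)\to+\infty$, while $g$ is continuous and strictly decreasing with $g(0^+)=+\infty$ and $g(\delta)\to d$. Hence $f-g$ is strictly increasing from $-\infty$ to $+\infty$, so there is a unique $\delta^\star>0$ with $f(\delta^\star)=g(\delta^\star)$; on $(0,\delta^\star)$ the max equals $g$ (decreasing) and on $(\delta^\star,\infty)$ it equals $f$ (increasing), so the minimum of the max is attained at $\delta^\star$ with common value $f(\delta^\star)=g(\delta^\star)$.

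The second step is to solve $c+\alpha\delta = d+\beta\delta^{-1}$ for $\delta^\star$. Multiplying through by $\delta>0$ gives the quadratic $\alpha\delta^2+(c-d)\delta-\beta=0$, whose only positive root (since the product of roots is $-\beta/\alpha<0$) is
\[
  \delta^\star=\frac{(d-c)+\sqrt{(d-c)^2+4\alpha\beta}}{2\alpha}\,.
\]
Substituting back, I would compute the common value $c+\alpha\delta^\star = c + \tfrac12\bigl[(d-c)+\sqrt{(d-c)^2+4\alpha\beta}\bigr] = \tfrac12\bigl[c+d+\sqrt{(d-c)^2+4\alpha\beta}\bigr]$, which is exactly the claimed expression.

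There is essentially no serious obstacle here; the only points requiring a little care are (a) making sure the crossing argument is rigorous — i.e. invoking strict monotonicity and the intermediate value theorem rather than just differentiating, so that no hypothesis beyond positivity of $c,d,\alpha,\beta$ is needed (and in fact nonnegativity of $c,d$ suffices) — and (b) checking the sign when extracting the positive root of the quadratic, which is automatic because $\alpha\beta>0$ forces the two roots to have opposite signs. I would also remark that one could alternatively verify the formula by a direct calculus check: the max is a convex function of $\delta$ (maximum of an affine-in-$\delta$ and a convex function of $\delta$ on $(0,\infty)$), so any local minimum is global, and at the kink the subdifferential contains $0$ precisely at $\delta^\star$; but the monotonicity argument is cleaner and I would present that one.
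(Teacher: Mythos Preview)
Your proof is correct and follows the same approach as the paper: the paper's proof is a one-line remark that ``the optimal value for $\delta$ corresponds to the equality $c+\alpha\delta = d+\beta\delta^{-1}$'', and you have simply fleshed out this observation with the monotonicity/IVT justification and the explicit computation of $\delta^\star$ and the common value.
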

\begin{proof}
	The optimal value for $\delta$ corresponds to the equality $c+\alpha \delta = d + \beta \delta^{-1}$.
\end{proof}
 Let
\begin{equation}
	\label{eq:PmPtildeA}
	\epsilon_A := \|P_0-\tilde P_0\|_A\, = \| Z^T (\, (Z^T\,A\,Z)^{-1} - \tilde E^{-1}) Z^T A \|_A,	
\end{equation}
the formula of Lemma~\ref{th:optannexe} yields
\begin{equation}
	\label{eq:crgeneoACS}
c_R := \frac{k_0(1+\epsilon_A^2)+\lambda_{max}(E\tilde E^{-1})+\sqrt{ (k_0(1+\epsilon_A^2)-\lambda_{max}(E\tilde E^{-1}))^2 + 4 \lambda_{max}(E\tilde E^{-1}) k_0 \epsilon_A^2 }  }{2}\,.	
\end{equation}
Actually, $\epsilon_A$ can be expressed in term of the minimal eigenvalue of $E\tilde E^{-1}$. 
\begin{lemma}
	Other formula for $\epsilon_A$:\\
\[
\epsilon_A = \sup_{{\mathbf U}_0\in \R^{\#{\mathcal N}_0}} \frac{(E(E^{-1}-\tilde E^{-1})E {\mathbf U}_0 , {\mathbf U}_0 )}{(E {\mathbf U}_0,{\mathbf U}_0)} = \max (|1-\lambda_{min}(E\tilde E^{-1})|\,,\,|1-\lambda_{max}(E\tilde E^{-1})| )\,.
\]
\end{lemma}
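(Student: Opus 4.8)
The plan is to reduce the computation of $\epsilon_A=\|P_0-\tilde P_0\|_A$ to a spectral statement about the matrix $\id-\tilde E^{-1}E$ equipped with the scalar product induced by $E$.

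First I would simplify the operator: by \eqref{eq:potilde} and the definition of $P_0$, one has $P_0-\tilde P_0=Z(E^{-1}-\tilde E^{-1})Z^TA$. By Lemma~\ref{th:kerimpotilde} this operator vanishes on $V_0^{A\perp}=\ker(Z^TA)$, which is exactly the $A$-orthogonal complement of its range $V_0$; since $\HO=V_0\oplus V_0^{A\perp}$ is an $A$-orthogonal splitting, the supremum defining $\|P_0-\tilde P_0\|_A$ is attained on $V_0$. Then I would parametrise $V_0$ by $x=Z\mathbf{U}_0$, $\mathbf{U}_0\in\R^{\#{\mathcal N}_0}$ (a bijection, since the columns of $Z$ are a basis of $V_0$). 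One computes $\|x\|_A^2=(AZ\mathbf{U}_0,Z\mathbf{U}_0)=(E\mathbf{U}_0,\mathbf{U}_0)$ and $(P_0-\tilde P_0)x=Z(E^{-1}-\tilde E^{-1})(Z^TAZ)\mathbf{U}_0=Z(\id-\tilde E^{-1}E)\mathbf{U}_0$, whence $\|(P_0-\tilde P_0)x\|_A^2=\big(E(\id-\tilde E^{-1}E)\mathbf{U}_0,(\id-\tilde E^{-1}E)\mathbf{U}_0\big)$. Writing $\|\cdot\|_E$ for the norm of the scalar product $(E\cdot,\cdot)$, this gives $\epsilon_A=\|\id-\tilde E^{-1}E\|_E$, the operator norm of $T:=\id-\tilde E^{-1}E$ for the $E$-scalar product.

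The key observation is that $T$ is self-adjoint for the $E$-scalar product, because $ET=E-E\tilde E^{-1}E$ is a symmetric matrix. Consequently $\|T\|_E$ equals both $\sup_{\mathbf{U}_0\neq0}|(ET\mathbf{U}_0,\mathbf{U}_0)|/(E\mathbf{U}_0,\mathbf{U}_0)$ and the spectral radius of $T$. The first expression, with $ET=E(E^{-1}-\tilde E^{-1})E$, is precisely the middle quantity of the statement (read with an absolute value in the numerator — the signed supremum alone would only yield $1-\lambda_{min}(E\tilde E^{-1})$, so this small abuse should be flagged). For the second, $T\mathbf{U}_0=\mu\mathbf{U}_0$ is equivalent to $E\mathbf{U}_0=(1-\mu)\tilde E\mathbf{U}_0$, so the eigenvalues of $T$ are the numbers $1-\lambda$ with $\lambda$ an eigenvalue of the pencil $(E,\tilde E)$, i.e. of $\tilde E^{-1}E$; these coincide with the eigenvalues of $E\tilde E^{-1}$ and are real and positive since $E,\tilde E$ are SPD. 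Hence $\|T\|_E=\max_\lambda|1-\lambda|=\max\big(|1-\lambda_{min}(E\tilde E^{-1})|,|1-\lambda_{max}(E\tilde E^{-1})|\big)$, which is the claimed formula.

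The argument is essentially routine linear algebra, so there is no single hard step; the points demanding care are the reduction to $V_0$ via Lemma~\ref{th:kerimpotilde} (turning the constrained supremum into an unconstrained one on $\R^{\#{\mathcal N}_0}$), the change of scalar product from $a(\cdot,\cdot)$ restricted to $V_0$ to $(E\cdot,\cdot)$ on $\R^{\#{\mathcal N}_0}$, and the correct use of the $E$-self-adjointness of $\id-\tilde E^{-1}E$ so that operator norm, spectral radius, and absolute Rayleigh quotient all agree — together with the harmless discrepancy just noted between the signed and absolute forms of the middle expression.
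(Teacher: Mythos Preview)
Your argument is correct and follows essentially the same route as the paper: reduce from $\HO$ to $V_0$ using that $P_0-\tilde P_0$ kills $V_0^{A\perp}$ (Lemma~\ref{th:kerimpotilde}), parametrise $V_0$ by $Z\mathbf{U}_0$ so that the $A$-norm becomes the $E$-norm on $\R^{\#{\mathcal N}_0}$, and then use that the resulting operator $\id-\tilde E^{-1}E$ is $E$-self-adjoint to identify operator norm, absolute Rayleigh quotient and spectral radius. The only cosmetic difference is the order: the paper invokes $A$-symmetry of $P_0-\tilde P_0$ at the outset to pass immediately to the Rayleigh-quotient form and then restricts to $V_0$, whereas you first restrict and then invoke self-adjointness of the reduced operator; the content is identical. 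Your remark about the missing absolute value in the middle expression is well taken --- the paper's own proof inserts it.
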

\begin{proof}
Since $P_0-\tilde P_0$ is $A$-symmetric, its norm is also given by 
\[
    \epsilon_A = \sup_{\mathbf U} \frac{|((P_0-\tilde P_0){\mathbf U}\,,\,{\mathbf U})_A|}{\|{\mathbf U}\|^2_A} 
\]
We can go further by using the fact that $P_0$ is a $A$-orthogonal and that $P_0$ and $\tilde P_0$ have the same kernels and images:
\[
\begin{array}{rcl}
\displaystyle	\epsilon_A &=& \displaystyle \sup_{\mathbf U}\displaystyle \frac{|((P_0-\tilde P_0)(P_0\,{\mathbf U}+(\id-P_0)\,{\mathbf U})\,,\,P_0\,{\mathbf U}+(\id-P_0){\mathbf U})_A|}{\|P_0 {\mathbf U}\|^2_A + \|(\id - P_0) {\mathbf U}\|^2_A} \\[.8em]
\displaystyle	&=& \displaystyle\sup_{\mathbf U} \frac{|((P_0-\tilde P_0)P_0\,{\mathbf U}\,,\,P_0\,{\mathbf U})_A|}{\|P_0 {\mathbf U}\|^2_A + \|(\id - P_0) {\mathbf U}\|^2_A} 
= \sup_{\mathbf U} \frac{|((P_0-\tilde P_0)P_0\,{\mathbf U}\,,\,P_0\,{\mathbf U})_A|}{\|P_0 {\mathbf U}\|^2_A } \\[.8em]
&=& \displaystyle\sup_{\mathbf U\in V_0} \frac{|((P_0-\tilde P_0)\,{\mathbf U}\,,\,{\mathbf U})_A|}{\|{\mathbf U}\|^2_A } = \sup_{{\mathbf U}_0\in \R^{\#{\mathcal N}_0}} \frac{|(E(E^{-1}-\tilde E^{-1})E {\mathbf U}_0 , {\mathbf U}_0 )|}{(E {\mathbf U}_0,{\mathbf U}_0)}\\
&=& \displaystyle \sup_{{\mathbf U}_0\in \R^{\#{\mathcal N}_0}} |1 -  \frac{(\tilde E^{-1} E {\mathbf U}_0 , E{\mathbf U}_0)}{(E {\mathbf U}_0,{\mathbf U}_0)}| \,.
\end{array}
\]
\end{proof}
This means that formula~\eqref{eq:crgeneoACS} for $c_R$ can be expressed explicitely in terms of $k_0$ and of the minimal and maximal eigenvalue of $\tilde E^{-1} E$.\\

$\bullet$ Stable decomposition\\
Let $\mathbf{U}\in \HO$ be decomposed as follows:
\[
\begin{array}{lcl}
	\mathbf{U} &=& P_0 \mathbf{U} + (\id-P_0) \mathbf{U} = P_0 \mathbf{U} + (\id-P_0) \sum_{j=1}^N R_j^T D_j R_j \mathbf{U} \\
	&=& P_0 \mathbf{U} + (\id-P_0) \sum_{j=1}^N R_j^T D_j (\id-\tilde \pi_j) R_j \mathbf{U} + \underbrace{(\id-P_0) \sum_{j=1}^N R_j^T D_j \tilde \pi_j\, R_j \mathbf{U}}_{=0}\\	
    &=& \underbrace{P_0 \mathbf{U} + (\tilde P_0-P_0) \sum_{j=1}^N R_j^T D_j (\id-\tilde \pi_j) R_j \mathbf{U}}_{:= F\mathbf{U}\,\in  V_0}
 +  (\id-\tilde P_0) \sum_{j=1}^N R_j^T D_j (\id-\tilde \pi_j) R_j \mathbf{U}\,.\\
\end{array}
\]
Let ${\mathbf U}_0\in\R^{\#{\mathcal N}_0}$ be such that $Z {\mathbf U}_0 = F\mathbf{U}$, we choose the following decomposition:
\[
\mathbf{U} = \widetilde \RL({\mathbf U}_0 , (D_j (\id-\tilde \pi_j) R_j \mathbf{U})_{1\le j \le N} )\,.
\]
The stable decomposition consists in estimating a constant $c_T>0$ such that: 
\begin{equation}
	\label{eq:stabledec}
	c_T\,[(\tilde E\,{\mathbf U}_0,{\mathbf U}_0) + \sum_{j=1}^N (R_j A R_j^T D_j (\id-\tilde \pi_j) R_j \mathbf{U} , D_j (\id-\tilde \pi_j) R_j \mathbf{U})]
\leq a( \mathbf{U} , \mathbf{U})\,.
\end{equation}
Since the second term in the left hand side is the same as in the exact coarse solve method, we have (see~\cite{Dolean:2015:IDDSiam}, page~177, Lemma~7.15):
\begin{equation}
	\label{eq:kuninequality}
	\sum_{j=1}^N (R_j A R_j^T D_j (\id-\tilde \pi_j) R_j \mathbf{U} , D_j (\id-\tilde \pi_j) R_j \mathbf{U}) \leq k_1\,\tau\, a( \mathbf{U} , \mathbf{U} )\,.
\end{equation}
We now focus on the first term of the left hand side of \eqref{eq:stabledec}. 
Let $\delta$ be some positive number, using again~\eqref{eq:kuninequality}, the following auxiliary result holds:
\[
\begin{array}{lcl}
\|F\mathbf{U}\|_A^2 &\leq& (1+\delta) \|P_0\mathbf{U}, P_0\mathbf{U}\|_A^2\\[.4em]
& &+ (1+\frac{1}{\delta}) \| (P_0-\tilde P_0) \sum_{j=1}^N R_j^T D_j (\id-\tilde \pi_j) R_j \mathbf{U} \|_A^2\\[.9em]
&\leq& (1+\delta) (A\mathbf{U},\mathbf{U})\\[.4em]
& & + (1+\frac{1}{\delta}) \| (P_0-\tilde P_0)\|_A^2\,\, 
\| \sum_{j=1}^N R_j^T D_j (\id-\tilde \pi_j) R_j \mathbf{U} \|_A^2\\[.9em]
&\leq& (1+\delta) (A\mathbf{U},\mathbf{U})\\[.4em]
& & + (1+\frac{1}{\delta}) \| (P_0-\tilde P_0)\|_A^2\,\, 
k_0 \sum_{j=1}^N \| R_j^T D_j (\id-\tilde \pi_j) R_j \mathbf{U} \|_A^2\\[.9em]
&\leq& \left(1+\delta + (1+\frac{1}{\delta}) \| (P_0-\tilde P_0)\|_A^2 k_0 k_1\,\tau\right)\, a(\mathbf{U},\mathbf{U})
\end{array}
\]
The best possible value for $\delta$ is 
\[
\delta := \epsilon_A \sqrt{k_0\,k_1\,\tau}\,.
\]
Hence, we have:
\begin{equation}
	\label{eq:majorationFU}
	 (Z^T\,A\,Z \mathbf{U}_0\,,\,\mathbf{U}_0) = \|F\mathbf{U}\|_A^2 \leq (1+\epsilon_A \sqrt{k_0\,k_1\,\tau})^2   a(\mathbf{U},\mathbf{U})\,.
\end{equation}
Thus, we have: 
\[
\begin{array}{rcl}
(\tilde E\, \mathbf{U}_0\,,\,\mathbf{U}_0) &=& (\tilde E\, E^{-1/2}E^{1/2} \mathbf{U}_0\,,\,E^{-1/2}E^{1/2}\mathbf{U}_0) = (E^{-1/2} \tilde E\, E^{-1/2}E^{1/2} \mathbf{U}_0\,,\,E^{1/2}\mathbf{U}_0) \\
&\le& \lambda_{max}(E^{-1/2} \tilde E\, E^{-1/2})\,(E^{1/2} \mathbf{U}_0\,,\,E^{1/2}\mathbf{U}_0) \\	
&=& \lambda_{max}(E^{-1} \tilde E) (Z^T\,A\,Z \mathbf{U}_0\,,\,\mathbf{U}_0)\\
&\le&  \lambda_{max}(E^{-1} \tilde E) (1+\epsilon_A \sqrt{k_0\,k_1\,\tau})^2   a(\mathbf{U},\mathbf{U})\,.
\end{array}
\]
This last estimate along with \eqref{eq:kuninequality}  prove that in \eqref{eq:stabledec}, it is possible to take 
\begin{equation}
	\label{eq:ctgeneoACS}
\displaystyle  c_T = \frac{\lambda_{min}(E \tilde E^{-1})}{ (1+\epsilon_A \sqrt{k_0\,k_1\,\tau})^2 + \lambda_{min}(E \tilde E^{-1}) k_1\tau }\,.	
\end{equation}
Overall, with $c_T$ given by \eqref{eq:ctgeneoACS} and $c_R$ by \eqref{eq:crgeneoACS}, we have proved the following spectral estimate:
\begin{equation}
	\label{eq:specgeneoACS}
	c_T \le \lambda(M^{-1}_{GenEOACS}\,A) \le  c_R\,.		
\end{equation}
Constants $c_T$ and $c_R$ are stable with respect to $\epsilon_A$ and the spectrum of $E \tilde E^{-1}$ so that \eqref{eq:specgeneoACS} proves the stability of preconditioner $M^{-1}_{GenEOACS}$ w.r.t. inexact solves. 
% subsection inexact_cs_for_geneo (end)

\section{Inexact Coarse Solves for GenEO2} % (fold)
\label{sec:geneo2_with_inexact_coarse_space}

The GenEO-2 coarse space construction was introduced in~\cite{Haferssas:2015:RCS,haferssas:2016:ASM} , see~\cite{Dolean:2015:IDDSiam} also \S~7.7, page~186. It is motivated by domain decomposition methods for which the local solves are not necessarily Dirichlet solves e.g. discretization of Robin boundary value problems, see~\cite{STCYR:2007:OMA}. We have not been able to prove the robustness of the  GenEO-2 coarse space with respect to inexact coarse solves when used in the original GenEO-2 preconditioner~\eqref{eq:Mm1geneo2notrobust}, see remark~\ref{rk:geneo2notrobust}. For this reason, we study here a slight modification of the preconditioner, eq.~\eqref{eq:Mm1geneo2}, for which we prove robustness. { The more intricate analysis of GenEO2 compared to the one of GenEO is related to the differences between the Schwarz and P.L.~Lions algorithms themselves. Indeed, in the Schwarz method, Assumption~(ii) of the fictitious space lemma~\ref{th:fictitiousSpaceLemma} comes almost for free even for a one level method whereas Assumption~(iii) (stable decomposition) can only be fulfilled with a two level method. In P.L.~Lions algorithm neither of the two assumptions are satisfied by the one level method. This is reflected in the fact that the proofs for GenEO2 are more intricate than for GenEO.}

For all subdomains $1\le i \le N$, let $B_i$ be a matrix of size ${\#\mathcal N_i}\times {\#\mathcal N_i}$, which comes typically from the discretization of boundary value local problems using optimized transmission conditions or Neumann boundary conditions. Recall that by construction matrix $D_i\,R_i A R_i^T D_i$ is symmetric positive-semi definite and we make the extra following assumption: 
\begin{assumption}
	\label{as:BiSP}
  For all subdomains $1\le i\le N$, matrix $B_i$ is symmetric positive semi-definite and either of the two conditions holds
  \begin{itemize}
  	\item $B_i$ is definite,
	\item $B_i=\widetilde A^i$  and $D_i\,R_i A R_i^T D_i$ is definite.
  \end{itemize}	
\end{assumption} 
We first consider the case where $B_i$ is definite. The other case will be treated in Remark~\ref{rq:NeuNeu}. 
We recall the coarse space defined in~\cite{Haferssas:2015:RCS,haferssas:2016:ASM,Dolean:2015:IDDSiam}. Let $\gamma$ and $\tau$ be two user defined thresholds. We introduce two generalized eigenvalue problems which by Assumption~\ref{as:BiSP} are regular.

\begin{definition}[Generalized Eigenvalue Problem for the lower bound]
	\label{th:tauthreshold}
For each subdomain $1\le j\le N$, 
%If $B_j$ is definite, w
we introduce the generalized eigenvalue problem
\begin{equation}
  \label{eq:eigAtildeB}
	{\begin{array}{c}
	\mbox{ Find }(\mathbf{V}_{jk},\lambda_{jk})\in
	\R^{\# {\mathcal N}_j}\setminus \{0\} \times \R
	\mbox{ such that}\\
	\widetilde A^j  \mathbf{V}_{jk} = \lambda_{jk} B_j \mathbf{V}_{jk} 
	\, \,.
	\end{array}}
\end{equation}
Let $\tau>0$ be a user-defined threshold and $\tilde\pi_j$ be the projection from $\R^{\# {\mathcal N}_j}$ on $V_{j\tau}=\text{Span}\{ \mathbf{V}_{jk} | \lambda_{jk} < \tau \}$ parallel to $\text{Span}\{ \mathbf{V}_{jk} | \lambda_{jk} \ge \tau \}$. We define $V_{j,geneo}^\tau\subset\R^{\# {\mathcal N}}$ as the vector space spanned by the family of vectors $(R_j^T D_j {\mathbf V}_{jk})_{\lambda_{jk} < \tau}$ corresponding to eigenvalues smaller than $\tau$. 
%If $B_j=\widetilde A^j$, $V_{j,geneo}^\tau$ is reduced to $\{0\}$ if $\tau<1$ and is $D_j \R^{\#{\mathcal N}_j} D_j$ if $\tau\ge 1$.\\
Let $V_{geneo}^\tau$ be the vector space spanned by the collection over all subdomains of vector spaces $(V_{j,geneo}^\tau)_{1\le j \le N}$. 
\end{definition}

\bigskip 

\begin{definition}[Generalized Eigenvalue Problem for the upper bound]
For each subdomain $1\le i\le N$, 
%If $B_j$ is definite, w
we introduce the generalized eigenvalue problem
	\label{def:gammathreshold}
	\begin{equation}
		\label{eq:thCG:eig_geneoBNNnew}
	\begin{array}{c}
	\text{Find } ({\mathbf U}_{ik},\mu_{ik})\in \R^{\# {\mathcal N}_i}\setminus \{0\} \times \R \mbox{ such that}\\[2ex]
	D_i R_i A R_i^T D_i {\mathbf U}_{ik} = \mu_{ik} B_i\, {\mathbf U}_{ik} \, 
	\,\,.
	\end{array}
	\end{equation}
	Let $\gamma>0$ be a user-defined threshold, we define $V_{i,geneo}^\gamma\subset\R^{\# {\mathcal N}}$ as the vector space spanned by the family of vectors $(R_i^T D_i {\mathbf U}_{ik})_{\mu_{ik} > \gamma}$ corresponding to eigenvalues larger than $\gamma$. Let $V_{geneo}^\gamma$ be the vector space spanned by the collection over all subdomains of vector spaces $(V_{j,geneo}^\gamma)_{1\le j \le N}$.
\end{definition}
Now, let $\xi_i$ denote the $B_i$-orthogonal projection from $\R^{\#{\mathcal N}_i}$ on 
\[
   V_{i\,\gamma}:=\text{Span} \left\{{\mathbf U}_{ik}\, |\, \gamma < \mu_{ik}  \right\}
\]
 parallel to 
 \[
     W_{i\,\gamma}:=\text{Span} \left\{{\mathbf U}_{ik}\, |\, \gamma \ge \mu_{ik}  \right\}\,.
 \]
%By Lemma~7.6, page~167 in \lecnot, we have:
%\begin{lemma}[Intermediate Lemma for GenEO-2]
 % \label{th:thCG:IntermediateLemmaSORAS}
 % For all subdomains $1\le i\le N$ and ${\mathbf{U}}_i\in \R^{ {\mathcal N}_i}$, we have:
%\begin{equation} \label{eq:eigprobSORAS}
%\tau\,\,  ((\id -\tilde\pi_i)
%{\mathbf{U}}_i)^T B_j (\id -\tilde\pi_i) {\mathbf{U}}_i 
%\le \; {\mathbf{U}}_i^T\widetilde A^i {\mathbf{U}}_i\,,
%\end{equation}
%and
%\begin{equation}
%  \label{eq:thCG:geneoBNNnew}
%  \left(R_i^T D_i (\id-\xi_i) {\mathbf U}_i\right)^T A\, R_i^T D_i (\id-\xi_i) {\mathbf U}_i) 
%  \le \gamma\ (B_i  (\id-\xi_i){\mathbf U}_i , (\id-\xi_i){\mathbf U}_i)\,.
%\end{equation}
%Let $b_i$ be the bilinear form related to $B_i$ (i.e. $b_i(\mathbf{U}_i,\mathbf{V}_i):=(B_i\mathbf{U}_i,\mathbf{V}_i)$), we note that $\xi_i$ is actually a $b_i$-orthogonal projection.
%\end{lemma}
The coarse space $V_0$ built from the above generalized eigenvalues is defined as the following sum:
\[
V_0 := V_{geneo}^\tau + V_{geneo}^\gamma\,.
\]
It is spanned by the columns of a full rank rectangular matrix $Z=R_0^T$ with $\#{\mathcal N}_0$ columns. Projection $P_0$ and its approximation $\tilde P_0$ are defined by the same formula as above, see~\eqref{eq:potilde}. 

%In addition to Lemma~\ref{th:thCG:IntermediateLemmaSORAS}, w
We have the following
\begin{lemma}
	\label{th:pjproj}
For $1\le j \le N$, let us introduce the $B_j$-orthogonal projection $p_j$ from $\R^{\#{\mathcal N}_j}$ on 
\[
  V_{j,\tau\gamma} := V_{j,\tau} \oplus V_{j,\gamma}\,.
\] 
Then for all ${\mathbf U}_j\in  \R^{\#{\mathcal N}_j}$, we have:
	\[
	\tau \, (B_j (\id-p_j) {\mathbf U}_j , (\id-p_j) {\mathbf U}_j )     \leq (\tilde A_j {\mathbf U}_j , {\mathbf U}_j)\,.
	\]
% 	and	(OU EST UTILISE?)
% \begin{equation}
%   \label{eq:thCG:geneoBNNnew}
%   (R_j\,A\,R_j^T\, D_j (\id-p_j) {\mathbf U}_i \, ,\,  D_j (\id-p_j) {\mathbf U}_j)
%   \le \gamma\ {\mathbf U}_j^T B_j  {\mathbf U}_j\,.
% \end{equation}
Moreover, for all $\mathbf{U}\in  \R^{\#{\mathcal N}}$, we have:
\[
 \tau\,\sum_{j=1}^N(B_j (\id-p_j) R_j \mathbf{U} , (\id-p_j) R_j \mathbf{U} )     \leq k_1\,a(\mathbf{U} , \mathbf{U})\,.
\]
\end{lemma}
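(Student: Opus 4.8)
The statement splits into a subdomain-wise estimate and a global one; the global estimate follows routinely from the local one together with a finite-overlap argument, so the core is the local inequality $\tau\,(B_j(\id-p_j)\mathbf{U}_j,(\id-p_j)\mathbf{U}_j)\le(\widetilde A^j\mathbf{U}_j,\mathbf{U}_j)$.

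For the local part the plan is to insert the smaller projection $\tilde\pi_j$ of Definition~\ref{th:tauthreshold} between $\id$ and $p_j$, and to use three facts. First, by Assumption~\ref{as:BiSP} the generalized eigenproblem~\eqref{eq:eigAtildeB} is a regular symmetric--definite pencil, so its eigenvectors $\mathbf{V}_{jk}$ can be chosen simultaneously $B_j$-orthogonal and $\widetilde A^j$-orthogonal; consequently $\tilde\pi_j$ is at once the $B_j$-orthogonal and the $\widetilde A^j$-orthogonal projection onto $V_{j,\tau}$, and $\id-\tilde\pi_j$ has range $\operatorname{Span}\{\mathbf{V}_{jk}:\lambda_{jk}\ge\tau\}$. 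Second, since $V_{j,\tau}\subseteq V_{j,\tau\gamma}$, the $B_j$-orthogonal projections onto these nested subspaces satisfy $\tilde\pi_j p_j=p_j\tilde\pi_j=\tilde\pi_j$, so $\id-\tilde\pi_j=(\id-p_j)+(p_j-\tilde\pi_j)$ is a $B_j$-orthogonal splitting and Pythagoras in the $B_j$-inner product yields $(B_j(\id-p_j)\mathbf{U}_j,(\id-p_j)\mathbf{U}_j)\le(B_j(\id-\tilde\pi_j)\mathbf{U}_j,(\id-\tilde\pi_j)\mathbf{U}_j)$. Third, expanding a generic vector of $\operatorname{Span}\{\mathbf{V}_{jk}:\lambda_{jk}\ge\tau\}$ in the eigenbasis gives $\tau(B_j\mathbf{W},\mathbf{W})\le(\widetilde A^j\mathbf{W},\mathbf{W})$ for every such $\mathbf{W}$. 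Applying the third fact to $\mathbf{W}=(\id-\tilde\pi_j)\mathbf{U}_j$ and then the $\widetilde A^j$-Pythagoras identity $(\widetilde A^j\mathbf{U}_j,\mathbf{U}_j)=(\widetilde A^j\tilde\pi_j\mathbf{U}_j,\tilde\pi_j\mathbf{U}_j)+(\widetilde A^j(\id-\tilde\pi_j)\mathbf{U}_j,(\id-\tilde\pi_j)\mathbf{U}_j)\ge(\widetilde A^j(\id-\tilde\pi_j)\mathbf{U}_j,(\id-\tilde\pi_j)\mathbf{U}_j)$ (valid since $\widetilde A^j$ is positive semidefinite) chains the three bounds into the desired local inequality.

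For the global estimate I take $\mathbf{U}_j:=R_j\mathbf{U}$ in the local one and sum over $j$, reducing matters to $\sum_{j=1}^N(\widetilde A^j R_j\mathbf{U},R_j\mathbf{U})\le k_1\,a(\mathbf{U},\mathbf{U})$. By~\eqref{eq:thCG:AtildeFE}, $(\widetilde A^j R_j\mathbf{U},R_j\mathbf{U})=a_{\Omega_j}\big(\sum_{l\in{\mathcal N}_j}\mathbf{U}_l\phi_l,\sum_{l\in{\mathcal N}_j}\mathbf{U}_l\phi_l\big)$; since $\phi_l$ vanishes a.e. on $\Omega_j$ whenever $l\notin{\mathcal N}_j$ by~\eqref{eq:thCG:Ni}, this argument coincides on $\Omega_j$ with the global finite element function $u:=\sum_{l\in{\mathcal N}}\mathbf{U}_l\phi_l$, and $a_{\Omega_j}$ depending only on values on $\Omega_j$ gives $(\widetilde A^j R_j\mathbf{U},R_j\mathbf{U})=a_{\Omega_j}(u,u)$. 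Writing $a_{\Omega_j}(u,u)=\int_{\Omega_j}\rho_u$ for the pointwise nonnegative integrand $\rho_u$ of the bilinear form and using that a.e.\ point of $\Omega$ lies in at most $k_1$ subdomains, $\sum_{j=1}^N\int_{\Omega_j}\rho_u=\int_\Omega\big(\sum_{j=1}^N\mathbf{1}_{\Omega_j}\big)\rho_u\le k_1\int_\Omega\rho_u=k_1\,a(\mathbf{U},\mathbf{U})$, which is the claim.

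The only genuinely delicate point is the norm mismatch in the local estimate: $p_j$ is orthogonal for $(\cdot,B_j\cdot)$ whereas the right-hand side carries the $\widetilde A^j$-norm, and a direct bound $(\widetilde A^j(\id-p_j)\mathbf{U}_j,(\id-p_j)\mathbf{U}_j)\le(\widetilde A^j\mathbf{U}_j,\mathbf{U}_j)$ is false in general because the $V_{j,\gamma}$ block of $\operatorname{range}p_j$ need not be $\widetilde A^j$-orthogonal to its complement. Factoring through $\tilde\pi_j$, which is orthogonal for \emph{both} inner products, is precisely what removes this obstruction, at the cost only of discarding the nonnegative term $(\widetilde A^j\tilde\pi_j\mathbf{U}_j,\tilde\pi_j\mathbf{U}_j)$.
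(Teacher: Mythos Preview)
Your proof is correct and follows essentially the same route as the paper: both factor through the smaller projection $\tilde\pi_j$, use the $B_j$-Pythagoras identity $\|(\id-\tilde\pi_j)\mathbf{U}_j\|_{B_j}^2=\|(\id-p_j)\mathbf{U}_j\|_{B_j}^2+\|(p_j-\tilde\pi_j)\mathbf{U}_j\|_{B_j}^2$ coming from $V_{j,\tau}\subset V_{j,\tau\gamma}$, and then invoke $\tau\,(B_j(\id-\tilde\pi_j)\mathbf{U}_j,(\id-\tilde\pi_j)\mathbf{U}_j)\le(\widetilde A^j\mathbf{U}_j,\mathbf{U}_j)$. The only difference is that where the paper cites this last inequality as Lemma~7.6 of \lecnot and dispatches the global bound in one line, you supply the eigenbasis/$\widetilde A^j$-orthogonality argument and the $k_1$-overlap computation explicitly.
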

\begin{proof}
	Let ${\mathbf U}_j\in \R^{\#{\mathcal N}_j}$, we have:
\[
\begin{array}{rcl}
	(B_j (\id-\tilde\pi_j) {\mathbf U}_j , (\id-\tilde\pi_j) {\mathbf U}_j ) 
	&=&  (B_j (\id- p_j +(p_j -\tilde\pi_j)) {\mathbf U}_j , (\id- p_j +(p_j -\tilde\pi_j)) {\mathbf U}_j ) \\
&=& \| (\id- p_j) {\mathbf U}_j \|_{B_j}^2 + \| (p_j-\tilde \pi_j) {\mathbf U}_j \|_{B_j}^2 \\
& &+ 2  \underbrace{(B_j (\id- p_j) {\mathbf U}_j , (p_j -\tilde\pi_j) {\mathbf U}_j )}_{= 0 \text{ since $\tilde \pi_j {\mathbf U}_j\in V_{j\,,\tau}$}\subset V_{j,\tau\gamma}}\\
&\ge& \| (\id- p_j) {\mathbf U}_j \|_{B_j}^2 = (B_j (\id-p_j) {\mathbf U}_j , (\id-p_j) {\mathbf U}_j ) \,.	
\end{array}
\]	
Since we have by Lemma~7.6, page~167 in \lecnot:
\[
 \tau \, (B_j (\id-\tilde\pi_j) {\mathbf U}_j , (\id-\tilde\pi_j) {\mathbf U}_j )  \leq (\tilde A_j {\mathbf U}_j , {\mathbf U}_j)\,,
\]
the conclusion follows by summation over all subdomains. 
\end{proof}

\newcommand{\bw}{b_{W_i}}
\newcommand{\Bw}{B_{W_i}}

The definition of the stable preconditioner is based on a pseudo inverse of $B_i$ that we introduce now. Let $\bw$ denote the restriction of $b_i$ to  $W_{i\,\gamma}\times W_{i\,\gamma}$ where $W_{i\,\gamma}$ is endowed with the Euclidean scalar product:
%\begin{equation}
	\label{eq:bw}
	\begin{align}
	\bw :& W_{i\,\gamma}\times W_{i\,\gamma} \longrightarrow \R\nonumber\\
		 & ({\mathbf U}_i\,,\,{\mathbf V}_i) \mapsto b_i({\mathbf U}_i\,,\,{\mathbf V}_i)\,.
	\end{align}
By Riesz representation theorem, there exists a unique isomorphism $\Bw: W_{i\,\gamma} \longrightarrow W_{i\,\gamma}$ into itself so that for all ${\mathbf U}_i\,,\,{\mathbf V}_i\in W_{i\,\gamma}$, we have:
\[
  \bw({\mathbf U}_i\,,\,{\mathbf V}_i) = (\Bw\,{\mathbf U}_i\,,\,{\mathbf V}_i)
  \,.
\]
The inverse of $\Bw$ will be denoted by $B_i^\dag$ and is given by the following formula
\begin{equation}
	B_i^\dag = (\id - \xi_i)B_i^{-1}\,.
\end{equation}
In order to check this formula, we have to show that $ \Bw(\id - \xi_i)B_i^{-1} y= y$ for all  $y\in W_{i\,\gamma}$. Let $z\in W_{i\,\gamma}$, using the fact that $\id-\xi_i$ is the $b_i$-orthogonal projection on $W_{i\,\gamma}$, we have:
\begin{align}
	( \Bw(\id - \xi_i)B_i^{-1} y , z) = b_i( (\id - \xi_i)B_i^{-1} y , z) = b_i( B_i^{-1} y , z) = (y,z)\,.
\end{align}
Since this equality holds for any $z\in W_{i\,\gamma}$, this proves that $ \Bw(\id - \xi_i)B_i^{-1} y= y$.\\[.5em]

We study now the preconditioner given by:
\begin{definition}[Preconditioner $M^{-1}_{GenEO2ACS}$]
	\label{def:precGenEO2ACS}
Let $q_i$ denote the orthogonal projection {(for the Euclidean scalar product)} from $\R^{\#{\mathcal N}_i}$ onto $W_{i\,\gamma}$. We define the preconditioner $M^{-1}_{GenEO2ACS}$ as follows:
\begin{align}
	\label{eq:Mm1geneo2}
  M^{-1}_{GenEO2ACS} &:=  Z\, \tilde E^{-1}\,Z^T \nonumber\\
  &+   (\id - \tilde P_0)\, \left(\sum_{i=1}^N R_i^T\,D_i\,q_i\,B_i^\dag\,q_i\, D_i\,\,R_i\right)\, (\id - \tilde P_0^T)\,.  
\end{align}	
\end{definition}

\begin{remark}
In order to write an explicit form for the projection $q_i$, we denote by $Z_{i\,\gamma}$ a rectangular matrix whose columns are a basis for $V_{i\,\gamma}$. Let ${\mathbf U}_i\in\R^{\#{\mathcal N}_i}$  be a vector we want to project. The projection $q_i {\mathbf U}_i$ is the solution to the constrained minimization problem:
\[
  \min_{{\mathbf W}_i \in W_{i\,\gamma}} \frac{1}{2} \| {\mathbf W}_i - {\mathbf U}_i  \|^2 = \min_{{\mathbf W}_i | (B_i Z_{i\,\gamma})^T{\mathbf W}_i = 0 } \frac{1}{2} \| {\mathbf W}_i - {\mathbf U}_i  \|^2
\]
Using the Lagrange multiplier technique, we introduce $\lambda\in\R^{dim(V_{i\,\gamma})}$ and the optimality conditions read:
\[
\begin{array}{rcl}
(q_i{\mathbf U}_i  - {\mathbf U}_i)	+ B_i Z_{i\,\gamma}\lambda &=& 0\\
	(B_i Z_{i\,\gamma})^T q_i{\mathbf U}_i  &=& 0\,.
\end{array}
\]
The vector $\lambda$ must satisfy
\[
   -  (B_i Z_{i\,\gamma})^T  {\mathbf U}_i	+ (B_i Z_{i\,\gamma})^T  B_i Z_{i\,\gamma}\lambda = 0\,.
\]

Finally, an explicit formula for the projection $q_i$ is:
	\[
q_i = \id - B_i Z_{i\,\gamma} ( (B_i Z_{i\,\gamma})^T B_i Z_{i\,\gamma} )^{-1} Z_{i\,\gamma}^T B_i^T \,.	
	\]
{Thus applying $q_i$ amounts to solving concurrently in each subdomain a small linear system of size the number of local eigenvectors contributing to the coarse space.}
\end{remark}

\begin{remark}
Note that  $q_i \,B_i^\dag$ is actually equal to $B_i^\dag$ but its presence shows the symmetry of the preconditioner. 
\end{remark}

We can now define the abstract framework for the preconditioner. Let $\HP$ be defined by
\[
	\HP := \R^{\#{\mathcal N}_0} \times \Pi_{i=1}^N W_{i\,\gamma}
\]
 endowed with the following bilinear form arising from local SPD matrices $(B_i)_{1\le i \le N}$
\begin{equation}
  \label{eq:bHPapprox2}
  \begin{array}{rcl}
  \tilde b: \HP \times \HP &\longrightarrow &\R \\    
\phantom{b: }({\mathcal U},{\mathcal V}) &\longmapsto& b({\mathcal
  U},{\mathcal V}):= (\tilde E {\mathbf U}_0,\,{\mathbf V}_0) + \sum_{i=1}^N (B_i\,{\mathbf U}_i,\,{\mathbf V}_i)\,
  \end{array}
\end{equation}
We denote by $\tilde B: \HP \rightarrow \HP$ the operator such that $(\tilde Bu_D,v_D)_D=\tilde b(u_D,v_D)$ for all $u_D,v_D\in\HP$.\\ 
Let $\widetilde\RL: \HP \longrightarrow \HO$ be defined using operator $\tilde P_0$ (see eq.~\eqref{eq:potilde}):
\begin{equation}
  \label{eq:RL3} 
\widetilde\RL({\cal U}):= Z\, \mathbf{U}_0 +  (\id- \tilde P_0 )\,\sum_{i=1}^N R_i^T D_i {\mathbf U}_i\,.
\end{equation}
Recall that if we had used an exact coarse space solve, we would have introduced:
\begin{equation}
  \label{eq:RLexact3} 
\RL({\cal U}):= Z\, {\mathbf U}_0 +  (\id- P_0 )\,\sum_{i=1}^N R_i^T D_i {\mathbf U}_i\,.
\end{equation}
Note that we have 
\[
	  \widetilde\RL({\cal U}) = \RL({\cal U}) + (P_0-\tilde P_0) \,\sum_{i=1}^N R_i^T D_i U_i\,.
\]
It can be checked that the resulting preconditioner with inexact coarse solve $M^{-1}_{GenEO2ACS}$ (Eq.~\eqref{eq:Mm1geneo2}) satisfies $M^{-1}_{GenEO2ACS}= \widetilde\RL\, \widetilde B^{-1} \,\widetilde\RL^T$. Indeed, we have:
\[
  \widetilde\RL^T {\mathbf V} = (Z^T {\mathbf V} ,  (q_i D_i R_i (\id - \tilde P_0^T){\mathbf V})_{1\le i\le N} ) 
\]

\paragraph{Auxiliary results on GEVP} % (fold)
\label{par:auxiliary_results_on_gevp}

Beware, in this paragraph, $A$ and $B$ have nothing to do with the global problem to be solved:
\begin{lemma}
	\label{th:esteig}
	Let $A$ be a symmetric positive semi definite matrix and $B$ be a symmetric positive definite matrix. We consider the generalized eigenvalue problem:
\[
   A {\mathbf U} = \lambda B {\mathbf U}\,.
\]
The generalized eigenvectors and eigenvalues are denoted by $({\mathbf  U}_k,\lambda_k)_{k\ge 1}$. Let $\tau$ be a positive number. We define
\[
	V_{\tau}:=\text{Span}\{ {\mathbf U}_k\,| \lambda_k < \tau\,\}\,.
\]
Let $W$ be any linear subspace. We denote by $p$ the $B$-orthogonal projection on $V_{\tau}\bigplus W$.\\
Then, for all ${\mathbf U}$ we have the following estimate:
\begin{equation}
	\label{eq:esteig}
	\tau\, (B\,(\id-p) {\mathbf U}\,,\, (\id-p) {\mathbf U})\ \le (A (\id-p) {\mathbf U} , (\id-p) {\mathbf U})\,.
\end{equation}
%WHAT IF EMPTY SPACE  C EST A DIRE ??? 
Similarly, let $\gamma$ be a positive number. We define 
\[
	V_{\gamma}:=\text{Span}\{ {\mathbf U}_k\,|\, \lambda_k > \gamma\,\}\,.
\]
Let $W$ be any linear subspace. We denote by $q$ the $B$-orthogonal projection on $V_{\gamma}\bigplus W$.\\
Then, for all ${\mathbf U}$ we have the following estimate:
\begin{equation}
	\label{eq:esteigup}
(A (\id-q) {\mathbf U} , (\id-q) {\mathbf U}) \le \gamma\,(B\,(\id-q) {\mathbf U}\,,\, (\id-q) {\mathbf U})\,.
\end{equation}
\end{lemma}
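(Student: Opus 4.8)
The plan is to reduce the generalized eigenvalue problem to a standard symmetric one and then argue componentwise in the eigenbasis. Since $B$ is symmetric positive definite it admits a symmetric positive definite square root $B^{1/2}$, and the substitution $\mathbf{W}=B^{1/2}\mathbf{U}$ turns $A\mathbf{U}=\lambda B\mathbf{U}$ into the standard symmetric eigenvalue problem $B^{-1/2}AB^{-1/2}\mathbf{W}=\lambda\mathbf{W}$. The matrix $B^{-1/2}AB^{-1/2}$ is symmetric positive semi-definite, hence orthogonally diagonalizable with real nonnegative eigenvalues; pulling back through $B^{-1/2}$ one obtains a basis $(\mathbf{U}_k)_{k\ge 1}$ of the ambient space that is simultaneously $B$-orthonormal, $(B\mathbf{U}_k,\mathbf{U}_l)=\delta_{kl}$, and $A$-orthogonal, $(A\mathbf{U}_k,\mathbf{U}_l)=\lambda_k\delta_{kl}$. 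Thus for any vector $\mathbf{V}=\sum_k c_k\mathbf{U}_k$ with $c_k=(B\mathbf{V},\mathbf{U}_k)$ one has $(B\mathbf{V},\mathbf{V})=\sum_k c_k^2$ and $(A\mathbf{V},\mathbf{V})=\sum_k\lambda_k c_k^2$.

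I would then apply this expansion to $\mathbf{V}=(\id-p)\mathbf{U}$. Since $p$ is the $B$-orthogonal projection onto a subspace containing $V_{\tau}=\text{Span}\{\mathbf{U}_k\mid\lambda_k<\tau\}$, the vector $(\id-p)\mathbf{U}$ is $B$-orthogonal to every $\mathbf{U}_k$ with $\lambda_k<\tau$, so $c_k=0$ whenever $\lambda_k<\tau$ and the expansion of $(\id-p)\mathbf{U}$ only involves indices with $\lambda_k\ge\tau$. Therefore
\[
(A(\id-p)\mathbf{U},(\id-p)\mathbf{U})=\sum_{k:\lambda_k\ge\tau}\lambda_k c_k^2\ \ge\ \tau\sum_{k:\lambda_k\ge\tau}c_k^2=\tau\,(B(\id-p)\mathbf{U},(\id-p)\mathbf{U}),
\]
which is exactly \eqref{eq:esteig}.

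The upper-bound estimate \eqref{eq:esteigup} is obtained by the same computation applied to $\mathbf{V}=(\id-q)\mathbf{U}$: now $q$ is the $B$-orthogonal projection onto a subspace containing $V_{\gamma}=\text{Span}\{\mathbf{U}_k\mid\lambda_k>\gamma\}$, so $(\id-q)\mathbf{U}$ is $B$-orthogonal to every $\mathbf{U}_k$ with $\lambda_k>\gamma$, its expansion involves only indices with $\lambda_k\le\gamma$, and hence $(A(\id-q)\mathbf{U},(\id-q)\mathbf{U})=\sum_{k:\lambda_k\le\gamma}\lambda_k c_k^2\le\gamma\sum_{k:\lambda_k\le\gamma}c_k^2=\gamma\,(B(\id-q)\mathbf{U},(\id-q)\mathbf{U})$.

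There is no genuinely hard step here; the only points requiring a moment's care are that the arbitrary extra subspace $W$ in $V_\tau\oplus W$ (resp. $V_\gamma\oplus W$) does not interfere — enlarging the range of the projection only makes $(\id-p)\mathbf{U}$ (resp. $(\id-q)\mathbf{U}$) $B$-orthogonal to \emph{more} vectors, and $B$-orthogonality to all of $V_\tau$ (resp. $V_\gamma$) is all that the argument uses — and that $A$ being merely positive semi-definite rather than definite is harmless, since the simultaneous diagonalization above and the chain of inequalities remain valid with $\lambda_k\ge 0$.
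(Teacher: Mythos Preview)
Your proof is correct and rests on the same key observation as the paper's: since $V_\tau\subset V_\tau\oplus W$, the vector $(\id-p)\mathbf{U}$ lies in $V_\tau^{B\perp}$, and on that subspace the Rayleigh quotient $(A\cdot,\cdot)/(B\cdot,\cdot)$ is bounded below by $\tau$ (and analogously for the $\gamma$ part). The only difference is presentational: you establish the Rayleigh-quotient bound by an explicit simultaneous diagonalization and componentwise expansion, whereas the paper simply invokes the variational characterization $\tau\le\min_{\mathbf{U}\in V_\tau^{B\perp}}(A\mathbf{U},\mathbf{U})/(B\mathbf{U},\mathbf{U})$ and then notes that shrinking the feasible set to $(V_\tau\oplus W)^{B\perp}$ can only increase the minimum. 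Your version is more self-contained; the paper's is terser but assumes the min--max principle as known.
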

\begin{proof}
 We have using $V_{\tau}\subset V_{\tau}\bigplus W$:
	\begin{equation*}
		\tau \le \min_{{\mathbf U} \in V_{\tau}^{B\bot} } \frac{(A{\mathbf U},{\mathbf U})}{(B{\mathbf U} , {\mathbf U})} \le \min_{{\mathbf U}  \in (V_{\tau}\bigplus W)^{B\bot}} \frac{(A{\mathbf U} , {\mathbf U})}{(B{\mathbf U} , {\mathbf U})}\,.
	\end{equation*}
For all ${\mathbf U}$, the vector $(\id-p) {\mathbf U}$ is $B$-orthogonal to $V_{\tau}\bigplus W$ and this ends the proof of~\eqref{eq:esteig}. The proof of~\eqref{eq:esteigup} follows similarily from 
\begin{equation*}
	\gamma \ge \max_{{\mathbf U} \in V_{\gamma}^{B\bot}} \frac{(A{\mathbf U},{\mathbf U})}{(B{\mathbf U} , {\mathbf U})}\,.
\end{equation*}
\end{proof}

\bigskip

% paragraph auxiliary_results_on_gevp (end)

In order to apply the fictitious space Lemma~\ref{th:fictitiousSpaceLemma} to the study of the preconditioner \eqref{eq:Mm1geneo2}, three assumptions have to be checked. \\

$\bullet$ $\widetilde\RL$ is onto.\\
Let $\mathbf{U}\in \HO$, we have  
\[
\begin{array}{rcl}
  \mathbf{U} &=& \tilde P_0 \mathbf{U} + (\id- \tilde P_0  )\,\mathbf{U} \\
&=& \tilde P_0 \mathbf{U} + (\id- \tilde P_0  )\,\sum_{i=1}^N R_i^T D_i R_i \mathbf{U} \\
&=& \tilde P_0 \mathbf{U} + (\id- \tilde P_0  )\,\sum_{i=1}^N R_i^T D_i \xi_i R_i \mathbf{U} + (\id- \tilde P_0  )\,\sum_{i=1}^N R_i^T D_i (\id - \xi_i) R_i \mathbf{U}\\
&=& \underbrace{\tilde P_0 \mathbf{U} + (P_0 - \tilde P_0) \sum_{i=1}^N R_i^T D_i \xi_i R_i \mathbf{U}}_{:=F\mathbf{U}} + \underbrace{(\id- P_0  )\,\sum_{i=1}^N R_i^T D_i \xi_i R_i \mathbf{U}}_{=0}\\[2.7em]
& & + (\id- \tilde P_0  )\,\sum_{i=1}^N R_i^T D_i (\id - \xi_i) R_i \mathbf{U}\,.
\end{array}
\]
Let us consider the last equality. Since $F\mathbf{U}$ is the sum of two terms that belong to ${\mathbf V}_0$ there exists ${\mathbf U}_0$ such that $Z {\mathbf U}_0 = F\mathbf{U}$. The third term is zero since $\sum_{i=1}^N R_i^T D_i \xi_i R_i \mathbf{U}\in V_0$. Note also that $(\id - \xi_i) R_i \mathbf{U}\in W_{i\,\gamma}$. Therefore, we have 
\[
   \mathbf{U} = \widetilde\RL({\mathbf U}_0, ((\id - \xi_i)\,R_i \mathbf{U})_{1\leq i\leq N} ))\,.
\]	

$\bullet$ Continuity of $\widetilde\RL$\\

We have to estimate a constant $c_R$ such that for all ${\mathcal U}=({\mathbf U}_0\,,\,({\mathbf U}_i)_{1\le i \le N})\in\HP$ we have:
\[
\begin{array}{rcl}
a(\widetilde\RL ({\mathcal U}),\widetilde\RL ({\mathcal U})) &\le& c_R\, \tilde b({\mathcal U}\,,\,{\mathcal U})\,\\[.9em]
&=& c_R [ (\tilde E\mathbf{U}_0 , \mathbf{U}_0) + \sum_{i=1}^N ( B_i \mathbf{U}_i , \mathbf{U}_i)]\,.
\end{array}
\]
Note that using $(\id - \xi_i) {\mathbf U}_i = {\mathbf U}_i$ (recall that ${\mathbf U}_i\in W_{i\gamma}$), we have:
\[
	\begin{array}{rcl}
	\tilde \RL ({\mathcal U}) &=& Z {\mathbf U}_0 + (\id-\tilde P_0) \sum_{i=1}^N R_i^T\,D_i\,{\mathbf U}_i\\[.9em]
	&=& Z {\mathbf U}_0 + (P_0-\tilde P_0)\sum_{i=1}^N R_i^T\,D_i\,{\mathbf U}_i + (\id- P_0) \sum_{i=1}^N R_i^T\,D_i\,{\mathbf U}_i \\[.9em]
	&=& \underbrace{Z {\mathbf U}_0 + (P_0-\tilde P_0)\sum_{i=1}^N R_i^T\,D_i\,(\id-\xi_i){\mathbf U}_i}_{\in V_0} + (\id- P_0) \sum_{i=1}^N R_i^T\,D_i\,(\id-\xi_i) {\mathbf U}_i
\end{array}
\]
We have thus the following estimate using the $A$-orthogonality of $\id-P_0$:
\[
	\begin{array}{rcl}
	a(\widetilde\RL ({\mathcal U}),\widetilde\RL ({\mathcal U})) 
	&=& \| Z {\mathbf U}_0 + (P_0-\tilde P_0)\sum_{i=1}^N R_i^T\,D_i\, (\id-\xi_i) {\mathbf U}_i \\
	& & + (\id-P_0) \sum_{i=1}^N R_i^T\,D_i\,(\id-\xi_i) {\mathbf U}_i \|_A^2\\[.9em]
	&=&  \| Z {\mathbf U}_0 + (P_0-\tilde P_0)\sum_{i=1}^N R_i^T\,D_i\, (\id-\xi_i) {\mathbf U}_i \|_A^2 \\
	& & + \| (\id-P_0) \sum_{i=1}^N R_i^T\,D_i\,(\id-\xi_i) {\mathbf U}_i \|_A^2\\[.9em]
	&\le& (1+\delta) \| Z {\mathbf U}_0 \|_A^2 + (1+\frac{1}{\delta})\| (P_0-\tilde P_0)\sum_{i=1}^N R_i^T\,D_i\,(\id-\xi_i) {\mathbf U}_i \|_A^2 \\
	& & + \| \sum_{i=1}^N R_i^T\,D_i\,(\id-\xi_i) {\mathbf U}_i \|_A^2\\[.9em]
	&\le& (1+\delta) (E\, {\mathbf U}_0\,,\,{\mathbf U}_0) + k_0 \sum_{i=1}^N \| R_i^T\,D_i\,(\id-\xi_i) {\mathbf U}_i \|_A^2\\ 
	& & + (1+\frac{1}{\delta})\| (P_0-\tilde P_0)\|_A^2 k_0 \sum_{i=1}^N \| R_i^T\,D_i\, (\id-\xi_i) {\mathbf U}_i \|_A^2 \\[.9em]
	&\le& (1+\delta) \lambda_{max}(E \tilde E^{-1}) (\tilde E\, {\mathbf U}_0\,,\,{\mathbf U}_0) \\
	& & + k_0\,\gamma (1 + (1+\frac{1}{\delta})\| (P_0-\tilde P_0)\|_A^2) \sum_{i=1}^N \,(B_i\,(\id-\xi_i) {\mathbf U}_i\,,\,(\id-\xi_i) {\mathbf U}_i ) \\[.9em]
	&\le& \max( (1+\delta)\,\lambda_{max}(E \tilde E^{-1}) \,,\, k_0\,\gamma\, (1 + (1+\frac{1}{\delta})\,\epsilon_A^2  )\,\, \tilde b( {\mathcal U} , {\mathcal U} )\,.
	\end{array}
\]
Based on Lemma~\ref{th:optannexe}, we can optimize the value of $\delta$ and take 
\begin{equation}
	\label{eq:crgeneo2ACS}
	c_R := \frac{  k_0\,\gamma\, (1 + \epsilon_A^2) + \lambda_{max}(E \tilde E^{-1}) + \sqrt{ (k_0\,\gamma\, (1 + \epsilon_A^2) - \lambda_{max}(E \tilde E^{-1}))^2 + 4 \lambda_{max}(E \tilde E^{-1}) k_0\gamma \epsilon_A^2 }   }{2}\,.
\end{equation}

$\bullet$ Stable decomposition

The stable decomposition estimate is based on using projections $p_j$ defined in Lemma~\ref{th:pjproj}. Let $\mathbf{U}\in \HO$ be decomposed as follows:
\[
\begin{array}{lcl}
	\mathbf{U} &=& P_0 \mathbf{U} + (\id-P_0) \sum_{j=1}^N R_j^T D_j (\id-p_j) R_j \mathbf{U} + \underbrace{(\id-P_0) \sum_{j=1}^N R_j^T D_j p_j R_j \mathbf{U}}_{=0}\\
 &=& \underbrace{P_0 \mathbf{U} + (\tilde P_0-P_0) \sum_{j=1}^N R_j^T D_j (\id-p_j) R_j \mathbf{U}}_{:= F\mathbf{U}\,\in  V_0}
 +  (\id-\tilde P_0) \sum_{j=1}^N R_j^T D_j (\id-p_j) R_j \mathbf{U}\,.\\
\end{array}
\]
We define ${\mathbf U}_0$ be such that $Z {\mathbf U}_0 = F\mathbf{U}$. We have that $(\id-p_j)R_j \mathbf{U}$ is $B_j$-orthogonal to $V_{\gamma\,j}\bigplus V_{\tau\,j}$ and thus to $V_{\gamma\,j}$. This means that $(\id-p_j)R_j \mathbf{U}\in W_{\gamma\,j}$ and that we can choose the following decomposition:
\[
\mathbf{U} = \widetilde \RL({\mathbf U}_0 , ( (\id-p_j) R_j \mathbf{U})_{1\le j \le N} )\,.
\]
The stability of the decomposition consists in estimating a constant $c_T>0$ such that :
\begin{equation}
	\label{eq:stabledec3}
	c_T\,[(\tilde E\,{\mathbf U}_0,{\mathbf U}_0) + \sum_{j=1}^N (B_j (\id-p_j) R_j \mathbf{U} ,  (\id-p_j) R_j \mathbf{U})]
\leq a( \mathbf{U} , \mathbf{U})\,.
\end{equation}
Using Lemma~\ref{th:pjproj}, we have
\begin{equation}
	\label{eq:esttaukunpj0}
	\tau\, \sum_{j=1}^N (B_j (\id- p_j) R_j \mathbf{U} , (\id- p_j) R_j \mathbf{U}) \leq k_1\,a( \mathbf{U} , \mathbf{U} )\,.
\end{equation}
We now focus on the first term of the left hand side of \eqref{eq:stabledec3}. 
% First note that we have two equivalent formulas for ${\mathbf U}_0$:
% \begin{equation}
% 	\label{eq:twoformulaU023}
% 	{\mathbf U}_0 = (Z^T\,Z)^{-1}\,Z^T\,F\mathbf{U} = (Z^T\,A\,Z)^{-1}\,A\,Z^T\,F\mathbf{U}
% \end{equation}
Let $\delta$ be some positive number, the following auxiliary result will be useful:
\[
\begin{array}{lcl}
\|F\mathbf{U}\|_A^2 &\leq& (1+\delta) \|P_0\mathbf{U}, P_0\mathbf{U}\|_A^2\\[.4em]
& &+ (1+\frac{1}{\delta}) \| (P_0-\tilde P_0) \sum_{j=1}^N R_j^T D_j (\id-p_j) R_j \mathbf{U} \|_A^2\\[.9em]
&\leq& (1+\delta) (A\mathbf{U},\mathbf{U})\\[.4em]
& & + (1+\frac{1}{\delta}) \| (P_0-\tilde P_0)\|_A^2 
\| \sum_{j=1}^N R_j^T D_j (\id-p_j) R_j \mathbf{U} \|_A^2\\[.9em]
&\leq& (1+\delta) a(\mathbf{U},\mathbf{U}) \\[.4em]
& & + (1+\frac{1}{\delta}) \| (P_0-\tilde P_0)\|_A^2\, k_0 \sum_{j=1}^N \| R_j^T D_j (\id-p_j) R_j \mathbf{U} \|_A^2\\[.9em]
&\leq& (1+\delta) a(\mathbf{U},\mathbf{U}) \\[.4em]
& & + (1+\frac{1}{\delta}) \| (P_0-\tilde P_0)\|_A^2\, k_0 \gamma \sum_{j=1}^N  (B_j (\id-p_j)\, R_j\,\mathbf{U} , (\id-p_j)\, R_j\,\mathbf{U})\,\\[.9em]
%&\leq& (1+\delta) a(\mathbf{U},\mathbf{U}) \\[.4em]
%& & + (1+\frac{1}{\delta}) \| (P_0-\tilde P_0)\|_A^2\, k_0 \gamma\,\tau^{-1} \sum_{j=1}^N  (\tilde A_j \, R_j\,\mathbf{U} , \, R_j\,\mathbf{U})\,\\[.9em]
&\leq& ((1+\delta) + (1+\frac{1}{\delta}) \| (P_0-\tilde P_0)\|_A^2\, k_0 \gamma\,\tau^{-1} k_1)\, a(\mathbf{U},\mathbf{U})\,
\end{array}
\]
where we have used Lemma~\ref{th:esteig} \eqref{eq:esteigup} (applied with $A$ replaced by $D_j\,R_j\, A\, R_j^T\, D_j$ and $B$ by $B_j$) for the one before last estimate and Lemma~\ref{th:pjproj} for the last estimate.\\

The optimal value for $\delta$ yields:
\begin{equation}
	\label{eq:FUestgeneo2}
	\|F\mathbf{U}\|_A^2 \le (1+\epsilon_A\,\sqrt{k_0\,k_1 \gamma\,\tau^{-1} })^2  a(\mathbf{U},\mathbf{U})\,.
\end{equation}
We have 
\[
\begin{array}{lcl}
   (\tilde E{\mathbf U}_0 , {\mathbf U}_0) &\leq& \lambda_{max} (E^{-1} \tilde E) (E{\mathbf U}_0 , {\mathbf U}_0) = \lambda_{max} (E^{-1} \tilde E) (A\,Z{\mathbf U}_0 , Z{\mathbf U}_0)\\
    &=& \lambda_{max} (E^{-1} \tilde E)  \|F\mathbf{U}\|_A^2\,.
\end{array}   
\]
so that with \eqref{eq:FUestgeneo2}, this yields:
\[
(\tilde E{\mathbf U}_0\, ,\,{\mathbf U}_0) \leq \lambda_{max} (E^{-1} \tilde E)\,(1+\epsilon_A \sqrt{k_0\,k_1\,\gamma\,\tau^{-1}})^2  \, a(\mathbf{U},\mathbf{U})\,.
\]
Finally, in \eqref{eq:stabledec3} we can take :
\begin{equation}
	\label{eq:ctestimategeneo2}
	c_T := \frac{1}{\lambda_{max} (E^{-1} \tilde E)\,(1+\epsilon_A \sqrt{k_0\,k_1\,\gamma\,\tau^{-1}})^2 + k_1\,\tau^{-1}}\,.
\end{equation}

Overall, with $c_T$ given by \eqref{eq:ctestimategeneo2} and $c_R$ by \eqref{eq:crgeneo2ACS}, we have proved the following spectral estimate:
\begin{equation}
	\label{eq:specgeneoACS2}
	c_T \le \lambda(M^{-1}_{GenEO2ACS}\,A) \le  c_R\,.		
\end{equation}
Constants $c_T$ and $c_R$ are stable with respect to $\epsilon_A$ and the spectrum of $E \tilde E^{-1}$ so that \eqref{eq:specgeneoACS2} proves the stability of preconditioner $M^{-1}_{GenEO2ACS}$~\eqref{eq:Mm1geneo2} w.r.t. inexact solves.

\begin{remark}
	\label{rk:geneo2notrobust}
	Had we taken the GenEO-2 algorithm introduced in \cite{haferssas:2016:ASM} and modified only the coarse space solves:
\begin{equation}
	\label{eq:Mm1geneo2notrobust}
  \widetilde M^{-1}_{GenEO,2} =  Z\, \tilde E^{-1}\,Z^T +   (\id - \tilde P_0)\, (\sum_{i=1}^N R_i^T\,D_i\,  B_i^{-1}\,\, D_i\,\,R_i)\, (\id - \tilde P_0^T)\,,
	\end{equation}
the estimate for the upper bound of the preconditioned system would be for arbitray $\delta>0$
	\[
	\lambda_{max} \le \max( 1+\delta\, ,\, k_0 \gamma + (1+\frac{1}{\delta})\epsilon_A^2 k_0 \max_{1\le i \le N} \lambda_{max} (B_i^{-1} \,D_i\,R_i\, A\, R_i^T  D_i)^2  )
	\]
and would depend on the product of $\epsilon_A$ with the largest eigenvalue of the local operators $B_i^{-1} \,D_i\,R_i\, A\, R_i^T  D_i$. This last term can be very large and we were not able to guarantee robustness with respect to  approximte coarse solves. 
\end{remark}

\begin{remark}
	\label{rq:NeuNeu}
  If for some subdomain $i$, $1\le i\le N$, $B_i=\tilde A_i$ and $\tilde A_i$ is symmetric positive semi-definite and $D_i R_i A R_i^T D_i$ is SPD, the eigenvalue problem \eqref{eq:eigAtildeB} will not contribute to the coarse space. % since all eigenvalues of \eqref{eq:eigAtildeB} are equal to one. 
  More precisely, the  contribution of the subdomain to the coarse space involves \eqref{eq:thCG:eig_geneoBNNnew} and will be $R_i^T\,D_i\,\ker (\tilde A_i)\bigoplus V^\gamma_{i,geneo}$. Also in Definition~\ref{def:precGenEO2ACS}, $B_i^\dag$ is the pseudo inverse of $B_i$ where $W_{i\gamma}$ is the image of $B_i$ which is orthogonal to $\ker (\tilde A_i)$.
  % and $\xi_i$ is the orthogonal projection on $\ker (\tilde A_i)$ parallel to $W_{i\gamma}$.
\end{remark}

%\begin{acknowledgements}
%  We thank the anonymous referee for his comments which lead us to add Annex~\ref{sec:annex}. 
%\end{acknowledgements}

{\section{Annex} % (fold)
\label{sec:annex}
We explain here how to adapt the GenEO coarse space as defined in~\cite{Dolean:2015:IDDSiam} so that it will behave as the one defined in \cite{Spillane:2014:ASC}. In one sentence, it consists simply in computing the local components of the coarse space on the subdomain extended by one layer (or more). 

More precisely, we start from a domain decomposition $(\Omega_i)_{1\le i \le N}$ and inherited indices decomposition $({\mathcal N}_i)_{1\le i \le N}$ as defined in the present article. Let us denote with a tilde $\tilde {}$ all the quantities related to the subdomains $\Omega_{\tilde i}$ obtained by extending by one (or more) layers of cells subdomains $\Omega_i$, see fig.~\ref{fig:omegapmusonelayer}. Similarly to~\cite{Dolean:2015:IDDSiam}, we define  
\[
	{\mathcal N}_{\tilde i} := \{ k \in {\mathcal N}\ | \ meas(Supp(\phi_k)\cap \Omega_{\tilde i})>0\}\,.
\]
\begin{figure}
	\label{fig:omegapmusonelayer}
	\begin{center}
  \includegraphics[width=0.43\textwidth]{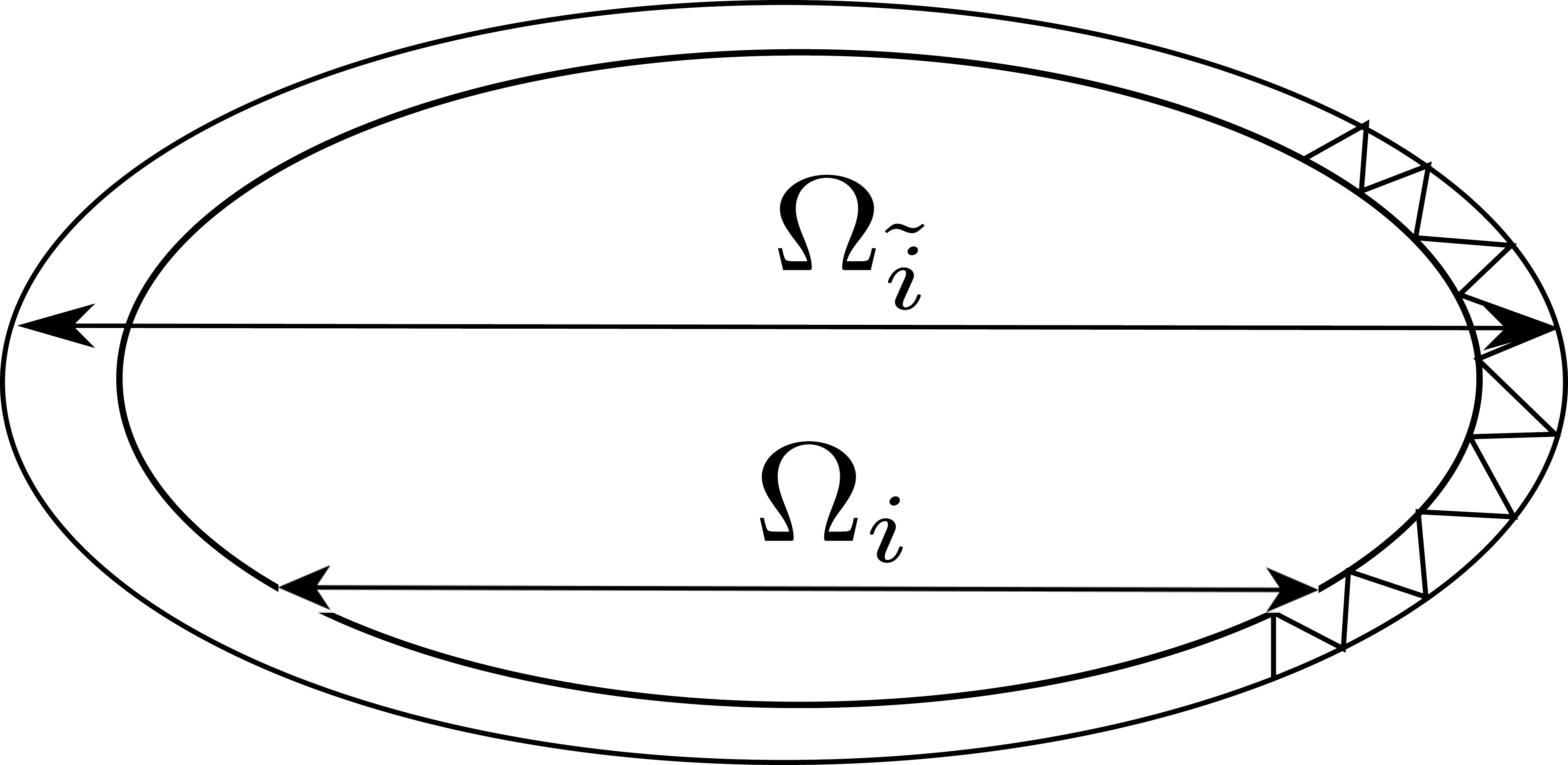} 
 \caption{ A subdomain and its extension by one layer}		
	\end{center}
\end{figure}
Since $\Omega_i \subset \Omega_{\tilde i}$ , we have 
\[
		{\mathcal N}_i \subset {\mathcal N}_{\tilde i} \,.
\]
 Also from the partition of unity on the original decomposition, we can define a partition of unity on $({\mathcal N}_{\tilde i})_{1\le i\le N}$ inherited from the one on $({\mathcal N}_i)_{1\le i\le N}$ by defining diagonal matrices $(D_{\tilde i})_{1\le i \le N}$ in the following manner:
 \[
 	(D_{\tilde i})_{kk} := \left\{
	\begin{array}{lcl}
		(D_i)_{kk} &\text{ if }& k\in {\mathcal N}_i \\
		0 &\text{ if }& k\in {\mathcal N}_{\tilde i} \backslash {\mathcal N}_i
	\end{array} 
	\right. \,.
 \]
We have clearly a partition of unity:
\[
	\id_{} = \sum_{i=1}^N R_{\tilde i}^T D_{\tilde i} R_{\tilde i}\,.
\]
Also since for all subdomains $1\le i\le N$, the entries of $D_{\tilde i}$ are zero on the added layers, we have the following equality:
\[
  D_i R_i = R_i R_{\tilde i}^T D_{\tilde i} R_{\tilde i}\,.
\]
The coarse space is built by first introducing the Neumann matrices $A_{\tilde i}^{Neu}$ on subdomains $\Omega_{\tilde i}$ for $1\le i \le N$, as in~\cite{Spillane:2014:ASC}, so that we have:
\[
	\sum_{i=1}^N (A_{\tilde i}^{Neu} R_{\tilde i}{\mathbf U}\,,\,R_{\tilde i}{\mathbf U}) \le {\widetilde\MC} (A {\mathbf U}\,,\,{\mathbf U})\,,
\]
where ${\widetilde\MC}$ is the maximum multiplicity of the intersections of subdomains $\Omega_{\tilde i}$. Let $V_{\tilde ik}$ be the eigenvectors of the following generalized eigenvalue problem:
\[
	D_{\tilde i}\, R_{\tilde i}\,A\,R_{\tilde i}^T\,D_{\tilde i}\,	V_{\tilde ik} = \lambda_{ik} A_{\tilde i}^{Neu}\, V_{\tilde ik}\,.
\]
Note that for $\lambda_{\tilde ik}$ not equal to $1$, the eigenvectors are harmonic for the interior degrees of freedom since for these points the left and right matrices have identical entries for the corresponding lines. Thus, for $\lambda_{\tilde ik} \neq 1$, we might as well zero the lines corresponding the interior degrees of freedom and keep only the entries of the degrees of freedom in the overlap. This GEVP is thus also of the type GenEO.

For a user-defined parameter $\tau$, let us define the coarse space as follows:
\[
	V_0 := Span\{ R_{\tilde i}^T\,D_{\tilde i}\,	V_{\tilde ik}\ |\ 1 \le i \le N,\ \lambda_{ik} > \tau \}\,,
\]
and a rectangular matrix $Z\in\R^{\#{\mathcal N}\times \#{\mathcal N}_0}$ whose columns are a basis of $V_0$ where ${\mathcal N}_0$ is a set of indices whose cardinal is the dimension of the vector space $V_0$. We also define local projections $(\pi_{\tilde i})_{1\le i \le N}$ on $Span\{ V_{\tilde ik}\ |\,\ \lambda_{ik} > \tau \}$ parallel to $Span\{ V_{\tilde ik}\ |\,\ \lambda_{ik} \le \tau \}$. 

We have then a stable decomposition. Indeed, let ${\mathbf U}\in\R^{\#{\mathcal N}}$,
\[
	{\mathbf U} = \sum_{i=1}^N R_i^T D_i R_i {\mathbf U} = \sum_{i=1}^N R_i^T R_i R_{\tilde i}^T D_{\tilde i} ( R_{\tilde i} {\mathbf U} - \pi_{\tilde i} R_{\tilde i} {\mathbf U}) + \sum_{i=1}^N R_i^T R_i R_{\tilde i}^T D_{\tilde i}  \pi_{\tilde i} R_{\tilde i} {\mathbf U}\,.  
\]
The last term is clearly in $V_0$ so that there exists ${\mathbf U}_0\in \R^{{\mathcal N}_0}$ such that 
\[
	Z {\mathbf U}_0 = \sum_{i=1}^N R_i^T R_i R_{\tilde i}^T D_{\tilde i}  \pi_{\tilde i} R_{\tilde i} {\mathbf U} = \sum_{i=1}^N R_{\tilde i}^T D_{\tilde i}  \pi_{\tilde i} R_{\tilde i} {\mathbf U} \in V_0\,.
\]
Let us define 
\[
	{\mathbf U}_i := R_i R_{\tilde i}^T D_{\tilde i} ( R_{\tilde i} {\mathbf U} - \pi_{\tilde i} R_{\tilde i} {\mathbf U}).
\]
This decomposition is stable since 
\[
\begin{split}
\sum_{i=1}^N (A R_i^T {\mathbf U}_i \,,\,R_i^T{\mathbf U}_i )
 &= \sum_{i=1}^N (A R_i^T R_i R_{\tilde i}^T D_{\tilde i} ( R_{\tilde i} {\mathbf U} - \pi_{\tilde i} R_{\tilde i} {\mathbf U}) \,,\, R_i^T R_i R_{\tilde i}^T D_{\tilde i} ( R_{\tilde i} {\mathbf U} - \pi_{\tilde i} R_{\tilde i} {\mathbf U}) ) \\
 &= \sum_{i=1}^N (A  R_{\tilde i}^T D_{\tilde i} ( R_{\tilde i} {\mathbf U} - \pi_{\tilde i} R_{\tilde i} {\mathbf U}) \,,\, R_{\tilde i}^T D_{\tilde i} ( R_{\tilde i} {\mathbf U} - \pi_{\tilde i} R_{\tilde i} {\mathbf U}) ) \\ 
 &\le \tau \sum_{i=1}^N (A_{\tilde i}^{Neu} R_{\tilde i} {\mathbf U} \,,\, R_{\tilde i} {\mathbf U} ) \le  \tau\widetilde\MC\, (A{\mathbf U} \,,\, {\mathbf U} )\,.
\end{split}
\]
Note also that Assumption~2.1 of \cite{Spillane:2014:ASC} is automatically satisfied in the finite element framework chosen here whereas Assumptions~3.12 and 3.13 are not needed here since our construction is simpler.}

% subsection geneo2 (end)

	\bibliographystyle{plain}
	\bibliography{../jsm,../../elasticite/NotesdeCours/Poly/bookddm}

\end{document}